\relax
\documentclass[letterpaper]{article} 
\usepackage{aaai20}  
\usepackage{times}  
\usepackage{helvet} 
\usepackage{courier}  
\usepackage[hyphens]{url}  
\usepackage{graphicx} 
\urlstyle{rm} 
\usepackage{graphicx}  
\frenchspacing  
\setlength{\pdfpagewidth}{8.5in}  
\setlength{\pdfpageheight}{11in}  
 \pdfinfo{
/Title (Pursuit of Low-Rank Models of Time-Varying Matrices Robust to Sparse and Measurement Noise)
/Author (Albert Akhriev, Jakub Marecek, Andrea Simonetto)
} 

\usepackage{url}            
\usepackage{booktabs}       
\usepackage{amsfonts}       
\usepackage{nicefrac}       
\usepackage{microtype}      

\usepackage{amsmath}

\usepackage{graphicx}
\usepackage{booktabs} 

\usepackage{siunitx}
\usepackage{graphicx}
\usepackage{url}
\usepackage{pdflscape}
\usepackage{multirow}
\usepackage{multicol}
\usepackage[english]{babel}
\usepackage{algorithmic}
\usepackage{algorithm2e}
\usepackage{comment}
\usepackage{latexsym, amsmath,amssymb, amsthm} 
\newtheorem*{theorem*}{Theorem}
\usepackage{graphics, epsfig, epstopdf, tabularx, footnote, booktabs,pbox, tabularx}
\usepackage{amsfonts}

\usepackage{enumitem}

\usepackage{amsmath}
\usepackage{pdflscape}
\usepackage{verbatim}
\usepackage{amsfonts}


\SetKwFor{ParallelForEach}{for each}{in parallel do}{endfor}

\newcommand{\Reals}{\mathbb{R}}



\newcommand{\ip}[2]{\left \langle #1 , \;  #2 \right \rangle}

\newcommand{\R}{\mathbf{R}}

\newcommand{\E}{\mathbf{E}}

\newcommand{\norm}[1]{\left\lVert#1\right\rVert}



\usepackage{epstopdf}
 
\usepackage{amsthm} 
\theoremstyle{plain}
\newtheorem{theorem}{Theorem}

\newtheorem{lemma}[theorem]{Lemma}

\newtheorem{assumption}[theorem]{Assumption}

\theoremstyle{definition}
\newtheorem{definition}[theorem]{Definition}

\theoremstyle{remark}

\usepackage{dcolumn}

\newcolumntype{d}[1]{D{.}{\cdot}{#1} }

\newcommand{\inTR}[1]{}

\SetCommentSty{\small\sf}

\usepackage{algorithmic}

\DeclareGraphicsExtensions{.pdf,.jpeg,.png}
\usepackage{algorithmic}
\usepackage{array}
\usepackage{url}

\newcommand{\cX}{{\cal X}}
\newcommand{\cY}{{\cal Y}}

\newcommand{\vzero}{{\bf 0}}

\renewcommand{\c}{{\bf c}}

\newcommand{\e}{{\bf e}}

\newcommand{\s}{{\bf s}}
\newcommand{\x}{{\bf x}}

\renewcommand{\v}{{\bf v}}

\newcommand{\y}{{\bf y}}
\newcommand{\A}{{\bf A}}

\newcommand{\C}{{\bf C}}

\renewcommand{\E}{{\bf E}}

\renewcommand{\L}{{\bf C}}
\newcommand{\M}{{\bf M}}
\renewcommand{\P}{{\bf P}}

\newcommand{\mopt}{\text{OPT}_{\!f}}




\newcommand{\hatSr}{\hat{S}_{\rm row}}
\newcommand{\hatSc}{\hat{S}_{\rm column}}

\DeclareMathOperator*{\minimise}{\mathrm{minimise}}

\title{Pursuit of Low-Rank Models of Time-Varying Matrices \\ Robust to Sparse and Measurement Noise}

\author{
  Albert Akhriev, Jakub Marecek, Andrea Simonetto \\
  IBM Research -- Ireland \\
 \{albert\_akhriev, jakub.marecek\}@ie.ibm.com, andrea.simonetto@ibm.com 
}

\begin{document}

\maketitle

\begin{abstract}
In tracking of time-varying low-rank models  of time-varying matrices,
we present a method robust to both uniformly-distributed measurement noise
and arbitrarily-distributed ``sparse'' noise.
In theory, we bound the tracking error.
In practice, our use of randomised coordinate descent is scalable and
allows for encouraging results on changedetection.net, a benchmark.
\end{abstract}

\section{Introduction}

Dimension reduction is a staple of Statistics and Machine Learning. 
In principal component analysis, its undergraduate-textbook version, possibly correlated observations  
are transformed to a combination of linearly uncorrelated variables, called principal components.
Often, a low number of principal components suffice
for the so-called low-rank model to represent the phenomenon observed.
Notoriously, however, a small amount of noise can change the 
principal components considerably.
A considerable effort has focussed on the development of robust approaches to principal component analysis (RPCA).
Two challenges remained: robustness to both sparse and non-sparse noise and theoretical guarantees in the time-varying setting.

We present the pursuit of time-varying low-rank models of time-varying matrices, which is
robust to both dense 
uniformly-distributed measurement noise and sparse arbitrarily-distributed noise.
Consider, for example, background subtraction problem in Computer Vision, where one wishes to distinguish fast-moving foreground objects from slowly-varying background in video data \cite{6180173}.
There, a matrix represents a constant number of frames of the video data, flattened to one row-vector per frame.
At any point in time, the low-rank model is captured by a short-and-wide matrix.
The time-varying low-rank model
makes it possible to capture slower changes, e.g., lighting conditions slowly changing with the cloud cover.
There may also be slight but rapid changes,
e.g., leaves of grass moving in the wind,
which could be captured by the uniformly-distributed dense noise.
Finally, the moving objects are captured by the sparse noise.
Clearly, low-rank modelling has wide-ranging applications beyond Computer Vision, 
wherever one needs to analyse high-dimensional streamed data
and flag abnormal observations to operators, 
while adapting the model of what is normal over time. 

\begin{table*}[t]
\caption{A comparison of our approach against five of the best-known RPCA implementations and the recent OMoGMF, featuring the F1 score on the baseline category of http://changedetection.net and mean run time (in seconds per input frame, single-threaded) on the ``baseline/highway'' video-sequence of the same benchmark.}
\label{relatedwork}
\centering 
\begin{tabular}{lllrr}
\toprule
Method & Model & Guarantees & {F1} & {Run-time} \\
\midrule
LRR\_FastLADMAP  & Low-rank + Sparse & Off-line: limit point KKT  & 0.36194 & 4.611\\
MC\_GROUSE & Low-rank + Sparse, $L_2$ &  --- & 0.31495 & 10.621 \\
OMoGMF   & GMM(Low-rank) + Sparse &  --- & 0.72611 & 0.123 \\
RPCA\_FPCP & Low-rank + Sparse &  --- & 0.37900 & 0.504  \\
ST\_GRASTA & Rank-1 + Sparse, $L_1$ & ---& 0.42367 & 3.266 \\
TTD\_3WD & Low-rank + Turbulence + Sparse & Off-line: limit point feasible & 0.40297 & 10.343 \\
Our approach & Low-rank + Uniform + Sparse &  \textbf{On-line: tracking error} & \textbf{0.80254} &  \textbf{0.103} \\
\bottomrule
\end{tabular}
\end{table*}

Our contributions are as follows:
\begin{itemize}
    \item we extend the low-rank + sparse model to low-rank + dense uniformly-distributed noise + sparse, where low-rank can be time-varying 
    \item we provide an algorithm with convergence-rate guarantees for the time-invariant case 
    \item we provide an algorithm with guarantees for the time-varying case. In Theorem \ref{thm:gam-msc-mss-proof}, 
we bound the tracking error of an algorithm for any low-rank factorisation problem for the first time. That is: we show that a sequence of approximately optimal costs eventually reaches the optimal cost trajectory.
\item we improve upon the statistical performance  of RPCA approaches on changedetection.net of \cite{goyette2012changedetection}, a well-known benchmark: the F1 score across 6 categories of changedetection.net improves by 28\%, from 0.44643 to 0.57099. On the baseline category, it is 0.80254.
\item we improve upon run time per frame of the same RPCA approaches, as detailed in Table~\ref{relatedwork}. 
Compared to TTD\_3WD, to give an example of a method which is still considered efficient in the literature, our single-threaded implementation is 103 times faster.
\end{itemize}


\section{Related Work}

Traditional approaches to robustness in  low-rank models \cite[to name some of the pioneering work]{candes2009exact}  
are based on a long history of work in robust statistics \cite{huber1981robust}. In such approaches \cite{candes2011robust,NIPS2013_5135,guo2014online,Mardani2013}, sometimes known as ``Low-rank + Sparse'', one balances the number of samples of the ``sparse'' noise and the rank of the model, or the 
nuclear norm as a proxy for the rank. 
There are a number of excellent  implementations, including some focused on the incremental update
\cite[e.g.]{Lin11,He11,Balzano13,Oreifej13,Meng13,Rodriguez13,dutta2017problem,dutta2017batch,8412568,8425657,8425658,8417980,Yong18}. 
In our comparison, we focus five of the best-known implementations and one very recent one.
LRR\_FastLADMAP \cite{Lin11}, RPCA\_FPCP \cite{Rodriguez13}, and MC\_GROUSE \cite{Balzano13} use the low-rank + sparse model.   
ST\_GRASTA \cite{He11} uses rank-1 + sparse. 
TTD\_3WD \cite{Oreifej13} uses low-rank + turbulence + sparse. 
The most recent formulation we consider is 
OMoGMF \cite{Yong18}, which utilises a Gaussian mixture model (GMM) structure over the low-rank model, plus sparse noise on top.
We refer to \cite{bhojanapalli2016global,NIPS2016_6517,jain2017non,boumal2018global,bhojanapalli2018smoothed} for the present-best theoretical analyses in the off-line, time-invariant case, but stress that no guarantees have been known for the on-line, time-varying case.
We refer to the recent handbook \cite{bouwmans2016handbook}  
and to the August 2018 special issue of the Proceedings of the IEEE \cite{8425660} for up-to-date surveys.

\section{Problem Formulation}
\label{sec:problem_statement}

Consider $N$ streams with $n$-dimensional measurements, coming from $N$ sensors with uniform sampling period $h$ from $t_k$ till $t_k + h T$ (possibly with many missing values), packaged in a (possibly partial) matrix $\M_k \in \Reals^{T \times n N}$. Every time a new observation comes in, its flattening is added at the bottom row to the matrix and the first row is discarded. In this way, the observation matrix slowly varies over time, i.e., $\M_{k+1}$ is different from $\M_k$, in general.

It is natural to assume that any row $d$ may resemble a linear combination of $r \ll T$ prototypical rows.
Prior to the corruption by sparse noise, we assume that there exists $\R_k \in \Reals^{r \times n N}$, such that  flattened observations $\x_d \in \Reals^{1 \times nN}$ are
\begin{align}
\label{assumption1}
\x_d = \c_d \R_k + \e_d,
\end{align}
where the row vector $\c_d \in \Reals^{1 \times r}$ weighs the rows of matrix $\R_k$, while $\e_d\in \Reals^{1 \times nN}$ is the noise row vector, where each entry be uniformly distributed between known, fixed $-\Delta$ and $\Delta$. 
Further, this  formulation~\eqref{assumption1} is extended 
towards the contamination model \cite{huber1981robust},
where ``sparse errors'' replace readings of some of the sensors. 
That is: Either we receive a measurement belonging to our model, or not:
\begin{equation}
(\x_d)_{i} = ({\bf{1}}_n-\mathbb{I}_{i,k}) \circ \left[(\c_d \R_k)_i + (\e_d)_i\right] + \mathbb{I}_{i,k}\circ\s_i,
\end{equation}
where index $i$ enumerates sensors, $\s_i \in \Reals^{1 \times n}$ is a generic noise vector, while the Boolean vector $\mathbb{I}_{i,k}\in\{0,1\}^n$ has entries that are all zeros or ones depending on whether we receive a measurement belonging to our model or not. The operation $\circ $ represents element-wise multiplication. 

Considering the matrix representation, we assume that the matrix $\M_k$ can be decomposed into slowly varying low-rank model ($\L_k \R_k$) and additive deviation ($\E_k$) from the model comprising noise and anomalies:
\begin{equation}
\M_k = \left[\begin{array}{c} 
\dots \\ \hline \x_d \\ \hline \dots
\end{array}\right] = \L_k \R_k + \E_k,
\end{equation}
where $T$ is the number of samples stacked in rows of matrix $\M_k$, $r$ is the number of prototypes in the low-rank approximation, $\x_d$ is a $d$-th row-vector in matrix $\M_k$, $\L_k \in \Reals^{T\times r}$ and $\E_k \in \Reals^{T \times n N }$ are the matrices incorporating the coefficient vectors $\c_d$'s and noise $\e_d$'s as $\L_k = [\dots; \c_d\,; \dots]$, and $\E_k = [\dots; \e_{d}\,; \dots]$, respectively.

The missing entries in $\M_k$ can represent either really absent data or outliers, such as moving objects in the case of video-processing applications. One can assume that normal behaviour exhibits certain regularity, which could be captured by a low-rank structure, and that events or anomalies are sparse across both time and space. The sparsity should be construed quite loosely, for example, comprising dense blobs of pixels moving coherently in video data, while occupying a relatively small fraction of image pixels in total. This notion of anomaly detection is widely used in monitoring streamed data, event recognition, and computer vision. 

If we can identify the low-rank model, any deviation from the  measurement model~\eqref{assumption1}  can be interpreted as an anomaly or event. 
When there are few measurements for which $\mathbb{I}_{i,k} = {\bf{1}}_n$ and those are different from standard measurements, i.e., 
the aggregated $\mathbb{I}_k \in\{0,1\}^{nN}$, which stacks all the individual $\mathbb{I}_k$ for a specific time $k$, is sparse,
and samples of $s_i$ fall outside of some range $[\underline{M}_{k,ij}, \overline{M}_{k,ij}]$ (defined below), it is possible to identify samples of $s_i$ perfectly.
In this paper, we provide a way to detect such anomalies, i.e., measurements for which $\mathbb{I}_{i,k} = {\bf{1}}_n$. 
Hence, we are effectively proposing a principal component pursuit algorithm robust to uniform and sparse noise.


We compute matrices $\L_k$ and $\R_k$ by resorting to a low-rank approximation of the matrix $\M_k$ with an explicit consideration of the uniformly-distributed 
error in the measurements.  Let $M_{k,ij}$ be the $(i,j)$ element of $\M_k$. Consider the interval uncertainty set $[M_{k,ij} -\Delta, M_{k,ij} + \Delta]$ around each observation. Finding $(\L_k,\R_k)$ can be seen 
as matrix completion with 
element-wise lower bounds $\underline{M}_{k,ij} := M_{k,ij} -\Delta$ and
element-wise upper bounds $\overline{M}_{k,ij} := M_{k,ij} + \Delta$. 
Let $\L_{k,i:}$ and $\R_{k,:j}$ be the $i$-th row and $j$-th column of $\L_k$ and $\R_k$, respectively. With Frobenius-norm regularisation, the completion problem we solve is:
\begin{equation}
\minimise_{\L_k\in \Reals^{T\times r}, \; \R_k\in \Reals^{r\times n N}} f(\L_k,\R_k; \M_k), 
\label{eq:Specific}
\end{equation}
where:
\begin{align}\label{defSpecific} 
f(\L_k,\R_k; \M_k) & {\,\,:=} &
\textstyle{\tfrac{1}{2}\sum_{(ij)} \ell(\underline{M}_{k,ij}-\L_{k,i:}\R_{k,:j})}
\nonumber \\
{} & {+} &
\textstyle{\tfrac{1}{2}\sum_{(ij)} \ell(\L_{k,i:}\R_{k,:j}-\overline{M}_{k,ij})}
\nonumber \\ 
{} & {+} & \tfrac{\nu}{2}\|\L_k\|_{F}^2 + \tfrac{\nu}{2}\|\R_k\|_{F}^2, \quad \quad \quad \; \;
\end{align}
where $\ell: \Reals \to \Reals$ is the square of the maximum of the two-element set composed of the argument and 0, as detailed in Section~``A Derivation of the Step Size''
of the appendix, and $\nu >0$ is a weight. 

Our only further assumption is that we have the element-wise constraints on all elements of the matricial variable: 
\begin{assumption}
    \label{ass:partition}
    For each $(i,j)$ of $\M_k$ there is a finite element-wise upper bound $\overline{M}_{k,ij}$ and a finite
    element-wise lower bound $\underline{M}_{k,ij}$.
\end{assumption}
This assumption is satisfied even for any missing values at $ij$ when the measurements lie naturally in a bounded set, e.g.,  $[0, 255]$ in many computer-vision applications.     

\section{Proposed Algorithms}
\label{sec:method}

In this section, we first present the overall schema of our approach in Algorithm \ref{alg:complete}. Second, we present  Algorithm~\ref{alg:SCDM} for on-line   inequality-constrained matrix completion, a crucial sub-problem.

\subsection{The Overall Schema}

Overall, we interleave 
the updates to the low-rank model via the inequality-constrained matrix completion,
detection of sparse noise, 
and updating of the inputs to the inequality-constrained matrix completion,
which disregards the sparse noise.

At each time step, we acquire new measurements $\x_d$ 
and compute their projection coefficients onto the low-rank subspace as
\begin{equation}
\v = \arg\min_{\v \in \mathbb{R}^{1 \times r}} \|\x_d - \v \R_{k-1}\|_p,
\label{proj}
\end{equation}
where $p$ can be the $1,2,\infty$ norm, or the $0$ pseudo-norm. 
Since for a very large number of sensors, even solving~\eqref{proj} can be challenging, we subsample $\x_d$ by picking only a few sensors uniformly at random. Let $i\in \tilde{\mathcal{N}}$ be the sampled sensors, with $|\tilde{\mathcal{N}}|= \tilde{N}$. We form a low-dimensional measurement vector $\tilde{\x}_d \in \mathbb{R}^{1\times n\tilde{N}}$ and solve the subsampled:
\begin{equation}\label{sampledproj}
\tilde{\v} = \arg\min_{\v \in \mathbb{R}^{1 \times r}} \|\tilde{\x}_d - \v (\R_{k-1}^i)_{i \in \tilde{\mathcal{N}}}\|_p,
\end{equation}
where $(\R_{k-1}^i)_{i \in \tilde{\mathcal{N}}} \in \mathbb{R}^{r\times n \tilde{N}}$ is the matrix whose columns corresponds to the   sensors, which are sampled uniformly at random. 
Solving~\eqref{sampledproj} yields solutions $\tilde{\v}$ such that the norm $\|\v - \tilde{\v}\|_p$ is very small, while being considerably less demanding computationally.  

Once the projection coefficients $\v$ have been computed, we can 
compute the discrepancy between the measurement $(\x_d)_i$ coming from sensor $i$  
and our projection \eqref{sampledproj}, 
$\|(\x_d)_i - (\v \R_{k-1})_i\|_p$, also known as the residual for sensor $i$.
We use the residuals in a two-step thresholding procedure inspired by \cite{Malistov2014}. 
In the first step, we use residuals 
to compute a coefficient $\lambda > 0$. 
In the second step, we consider the individual residuals as samples of an empirical distribution,
and take the value at risk (VaR) at $\lambda$ as a threshold.
We provide details in \cite{akhriev2019deep,akhriev2018pursuit}.
The test as to whether residual at each sensor is below the threshold results in a binary map, suggesting whether the observation of each sensor is likely to have come from our model or not.
For a positive value at $i$ in the map, the measurement $(\x_d)_i$ is kept in $\M_k$. Otherwise, it is discarded. 

\begin{algorithm}[t]
    \caption{Pursuit of low-rank models of time-varying matrices robust to both sparse and measurement noise.}
    \label{alg:complete}
    \begin{algorithmic}[1]
        \item[] \textbf{Input}: Initial matrices $(\L_{0}, \R_{0})$, rank $r$ 
        \item[] \textbf{Output}: $(\L_{k}, \R_{k})$ and events for each $k$ 
        \FOR{each time ${t_k}: k=1,2,\dots,\, t_{k+1}-t_k = h$}
        \STATE{acquire new measurements $\x_d$}
        \STATE{subsample $\x_d$ uniformly at random to obtain $\tilde{\x}_d$ }
        \STATE{compute $\tilde{\v}$ via the subsampled projection ~\eqref{sampledproj}}
        \FOR{each sensor $i$ {\bf in parallel}}
        \STATE{compute residuals $r_i = \|(\x_d)_i - (\tilde{\v} \R_{k-1})_i\|$ }
        \ENDFOR
        \STATE{compute $\lambda$ as a function of $\{r_i\}_i$ as described in the appendix}
        \STATE{compute $T$ as a value at risk at $\lambda$ of $\{r_i\}$} 
        \STATE{initialise $\y$ as a boolean all-False vector of same dimension as $\x_d$ }
        \FOR{each sensor $i$ {\bf in parallel}}
        \IF{$r_i < T$}
        \STATE{set $\y_i$ to True, as value at sensor $i$ is likely to come from our model}
        \STATE{add $(\x_d)_i$ to $\M_k$}
        \ENDIF
        \ENDFOR
        \STATE{compute $(\L_{k}, \R_{k})$ via Algorithm~\ref{alg:SCDM} with rank $r$}
        \ENDFOR
        \STATE{\bf{return} $(\L_{k}, \R_{k}, \y)$ }
    \end{algorithmic}
\end{algorithm}

\subsection{On-line Matrix Completion}

Given $\M_k$, we utilise \emph{inequality-constrained matrix completion}, to estimate the low-rank approximation 
$(\L_{k}, \R_{k})$    of the input matrix considering interval uncertainty sets. 
Clearly, solving the non-convex problem \eqref{eq:Specific} for non-trivial dimensions of matrix $\M_k$ 
to a non-trivial accuracy at high-frequency 
requires careful algorithm design. 
We propose an algorithm that tracks the low-rank $\R_k$ over time, increasing the accuracy of the solution of~\eqref{eq:Specific} while new observations are brought in, and old ones are discarded. 
In particular, we propose the on-line alternating parallel randomised block-coordinate descent method 
summarized in Algorithm~\ref{alg:SCDM}.

For each input $k$, the previously-found approximate solutions $(\L_{k-1}, \R_{k-1})$,
are updated based on the new observation matrix $\M_k$,
the correspondingly-derived element-wise lower and upper bounds $\underline{M}_{k,ij}, \overline{M}_{k,ij}$, and
the desired rank $r$.
The update is computed using the alternatig least squares (ALS) method,
which is based on the observation that while $f$ \eqref{eq:Specific} is not convex jointly
in $(\L_k,\R_k)$, it is convex in $\L_k$ for fixed $\R_k$ and in $\R_k$ for fixed $\L_k$.
The update takes the form of a sequence $\{(\L_{k}^{T,\tau}, \R_{k}^{T,\tau})\}$ of solutions, which are progressively more accurate.
If we could run a large number of iterations of the ALS, we would be in an off-line mode.
In the on-line mode, we keep the number of iterations small, and apply the final update 
based on $\M_k$ at time
$t_{k + 1}$, when the next observation arrives.     

The optimisation in each of the two alternating least-squares problems is based on parallel block-coordinate descent, as reinterpreted by \cite{Nesterov2012}.
Notice that in Nesterov's optimal variant, one requires the
the modulus of Lipschitz continuity restricted to the sampled coordinates
\cite[Equation 2.4]{Nesterov2012} to compute the step $\delta$.
Considering that the modulus is not known \emph{a priori}, we maintain an estimate $W_{i\hat{r}}^{T,\tau}$ of the modulus of Lipschitz continuity  restricted to the $\L_{k,i \hat r}^{T,\tau}$
sampled, 
and estimate $V_{\hat{r} j}^{T,\tau}$ of the modulus of Lipschitz continuity  restricted to the $\R_{k, \hat r j}^{T,\tau}$ sampled.
We refer to the appendix for the details of the estimate and to  \cite{Nesterov2012} for a high-level overview.

Overall, when looking at Algorithm~\ref{alg:SCDM}, notice that there are several nested loops. The counter for the update of the input is $k$.
For each input, we  consider factors $\L$ and $\R$ as the optimisation variable alternatingly, with counter $T$.
For each factor, we take a number of block-coordinate descent steps, with the blocks sampled randomly; the counter for the block-coordinate steps is $\tau$.
In particular, in Steps 3--8 of the algorithm, we fix $\R_k^{T,\tau}$, choose a  random $\hat{r}$ and a random set $\hatSr$ of rows of $\L_k$, and, in parallel for $i \in \hatSr$, update  $\L_{k,i\hat{r}}^{T,\tau+1}$ to $\L_{k,i\hat{r}}^{T,\tau} + \delta_{i\hat{r}}$,
where the step is:
\begin{equation} \label{eq:delta_L}
\delta_{i\hat{r}}:= - \langle \nabla_{\L_k} f(\L_k^{T,\tau},\R_k^{T,\tau}; \M_k), \P_{i\hat{r}}\rangle / W_{i\hat{r}}^{T,\tau},
\end{equation}
and $\P_{i\hat{r}}$ is the $n \times r$ matrix with $1$ in entry $(i\hat{r})$ and zeros elsewhere.
The computation of $\langle \nabla_{\L_k} f(\L_k^{T,\tau},\R_k^{T,\tau}; \M_k), \P_{\hat{r}j}\rangle$ can be simplified considerably, 
as explained in 
in Section ``A Derivation of the Step Size'' 
of the appendix. 

Likewise, in Steps 9--14, we fix  $\L_k^{T,\tau+1}$, choose a  $\hat{r}$ and a random set $\hatSc$ of columns of $\R_k$, and, in parallel
for $j \in \hatSc$, update $R_{k,\hat{r}j}^{T,\tau+1}$ to $R_{k,\hat{r}j}^{T,\tau} + \delta_{\hat{r}j}$, where the step is:
\begin{equation} \label{eq:delta_R}
\delta_{\hat{r}j}:= - \langle \nabla_{\R_k} f(\L_k^{T,\tau+1},\R_k; \M_k), \P_{\hat{r}j}\rangle / V_{\hat{r}j}^{T,\tau},
\end{equation}
and $\P_{\hat{r}j}$ is the $r\times m$ matrix with $1$ in entry $(\hat{r}j)$ and zeros elsewhere.
Again, the computation of $\langle \nabla_{\R_k} f(\L_k^{T,\tau+1},\R_k; \M_k), \P_{\hat{r}j}\rangle$ can be simplified.

\begin{algorithm}[t]
    \caption{On-line inequality-constrained matrix-completion via randomised coordinate descent.}
    \label{alg:SCDM}
    \begin{algorithmic}[1]
        \item[] \textbf{Input}: updated $\M_k$, $\underline{M}_{k,ij}, \overline{M}_{k,ij}$,
        previous iterate $(\L_{k-1}, \R_{k-1})$, 
        rank $r$, limit $\overline{\tau}$
        \item[] \textbf{Output}: $(\L_{k}, \R_{k})$ 
        \STATE \label{alg:stp:initialpoint} Initialise: $(\L_{k}^{0,0} = \L_{k-1}, \R_k^{0,0} = \R_{k-1})$, $T = 0$
        \WHILE{$\M_{k+1}$ is not available} 	
        \FOR{$\tau=0,1,2,\dots,\overline{\tau}$} 
        \STATE choose $\hatSr \subseteq \{1,\dots,m\}$
        \FOR{$i\in \hatSr$ {\bf in parallel}}
        \STATE choose $\hat r \in \{1,\dots,r\}$ uniformly at random
        \STATE compute $\delta_{i \hat r}$ using formula \eqref{eq:delta_L}
        \STATE update $\C_{k, i \hat r}^{T,\tau+1}  \leftarrow  \C_{k, i \hat r}^{T,\tau} + \delta_{i \hat r}$
        \ENDFOR
        \ENDFOR 			
        \FOR{$\tau=0,1,2,\dots,\overline{\tau}$}
        \STATE choose $\hatSc \subseteq \{1,\dots,n\}$ uniformly at random
        \FOR{$j\in \hatSc$ {\bf in parallel}}
        \STATE choose $\hat r \in \{1,\dots,r\}$ uniformly at random
        \STATE compute $\delta_{\hat{r}j}$ using \eqref{eq:delta_R}
        \STATE update $\R_{k, \hat{r}j}^{T,\tau+1} \leftarrow \R_{k, \hat{r}j}^{T,\tau} + \delta_{\hat{r} j}$
        \ENDFOR
        \ENDFOR
        \STATE set: $\L_{k}^{T+1,0} = \L_{k}^{T,\overline{\tau}+1}$, $\R_{k}^{T+1,0} = \R_{k}^{T,\overline{\tau}+1}$
        \STATE update: $T = T+1$
        \ENDWHILE
        \STATE \textbf{return} $\C_k = \C_k^{T,0}$, $\R_k = \R_k^{T,0}$ 
    \end{algorithmic}
\end{algorithm}

\section{Convergence Analysis}
\label{sec:anal}

For the off-line inequality-constrained 
matrix completion problem \eqref{eq:Specific}, \cite{marecek2017matrix} proposed an algorithm similar to Algorithm \ref{alg:SCDM} and presented a convergence result, which states that the method is monotonic and, with probability 1, converges to the so-called bistable point, i.e.,  $\liminf_{T \to \infty} \|\nabla_{\L} f(\L^{\tau},\R^{\tau}; \M)\| = 0,$ and  $\liminf_{T \to \infty} \|\nabla_{\R} f(\L^{\tau},\R^{\tau}; \M)\| = 0$. 
Here, we need to show the rate of convergence to the bistable point and a distance of the bi-stable point to an optimum $f^*$:


\begin{theorem}[]
    \label{thm:gam-msc-mss-proof}
    There exists $\overline \tau > 0$, such that
    Algorithm~\ref{alg:SCDM} with the initialization to all-zero vector after 
    at most $T = O(\log\frac{1}{\epsilon})$ steps has $f( \L^T, \R^T) \leq f^\ast + \epsilon$
    with probability 1.
\end{theorem}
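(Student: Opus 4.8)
The plan is to treat the inner randomised coordinate-descent loops as (near-)exact block minimisations and then to analyse the outer scheme as a linearly convergent alternating minimisation whose fixed points coincide with the global optimum. First I would record the two structural facts that drive everything. Because $\ell(x) = \max(x,0)^2$ is convex and continuously differentiable with $2$-Lipschitz gradient, each loss term $\ell(\underline{M}_{k,ij} - \L_{k,i:}\R_{k,:j})$ is convex and smooth in $\L$ for fixed $\R$ (and symmetrically in $\R$), while the Tikhonov terms $\tfrac{\nu}{2}\|\L\|_F^2$ and $\tfrac{\nu}{2}\|\R\|_F^2$ make $f(\cdot,\R)$ and $f(\L,\cdot)$ \emph{$\nu$-strongly convex}. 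Hence each of the two block subproblems is strongly convex with a gradient whose coordinate-wise modulus of Lipschitz continuity is exactly what $W_{i\hat r}^{T,\tau}$ and $V_{\hat r j}^{T,\tau}$ estimate in Algorithm~\ref{alg:SCDM}.

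Second, I would invoke the randomised-coordinate-descent rate of \cite{Nesterov2012}: on a $\nu$-strongly convex, coordinate-smooth function the iterates contract linearly, so there is a finite $\overline\tau$, depending on $\nu$ and the coordinate moduli, after which each inner loop drives the corresponding block to within any prescribed accuracy of its exact minimiser. Fixing such a $\overline\tau$ lets me treat Steps 3--8 and 9--14 as exact block minimisation up to a negligible perturbation, so that the outer loop is essentially exact alternating minimisation, which is monotone and converges almost surely to a bistable point by \cite{marecek2017matrix}.

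Third comes the rate. Using $L$-smoothness of each block, an exact block solve decreases $f$ by at least $\tfrac{1}{2L}\|\nabla_{\L} f\|^2$ (resp.\ $\tfrac{1}{2L}\|\nabla_{\R} f\|^2$), while $\nu$-strong convexity gives the matching Polyak--\L ojasiewicz bound $f - f^\ast \le \tfrac{1}{2\nu}\|\nabla f\|^2$ within each block. To turn this per-block decrease into a geometric contraction of the \emph{joint} gap $f(\L^T,\R^T) - f^\ast$, I would establish a Kurdyka--\L ojasiewicz (equivalently Luo--Tseng error-bound) inequality for $f$: since $\ell$ is piecewise quadratic and the matrix products are polynomial, $f$ is semialgebraic and hence satisfies a KL inequality, and I would argue that the KL exponent equals $\tfrac12$ near the limit, so that one full alternating cycle contracts the gap, $f(\L^{T+1},\R^{T+1}) - f^\ast \le \rho\,[f(\L^T,\R^T) - f^\ast]$ with $\rho<1$; telescoping then yields $T = O(\log\tfrac{1}{\epsilon})$. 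Pinning the KL exponent to $\tfrac12$, rather than a generic exponent that would give only a sublinear rate, is the technically delicate part of this step.

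Finally, identifying $f^\ast$ with the \emph{global} optimum is the main obstacle, because $f$ is jointly non-convex and a bistable point is a priori only first-order stationary. Here I would combine the almost-sure convergence to a bistable point from \cite{marecek2017matrix} with the fact that the random coordinate choices almost surely avoid strict saddles, and then appeal to the no-spurious-local-minima geometry of regularised low-rank factorisation \cite{bhojanapalli2016global}, by which every second-order stationary point of $f$ is globally optimal. Together, saddle avoidance and the global-optimality-of-local-minima result force the limit of the all-zero-initialised iterates to have value $f^\ast$, and the KL contraction then delivers the $O(\log\tfrac{1}{\epsilon})$ rate with probability $1$. The crux is precisely this interplay: controlling the non-convex landscape, the randomised avoidance of saddles, and the error bound uniformly enough to obtain a \emph{linear} rate to the global value rather than mere asymptotic convergence.
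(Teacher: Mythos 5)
Your plan reaches the same destination by a genuinely different route. The paper's proof is a direct transcription of the alternating-minimisation analysis of Jain et al.\ \cite{jain2017non}: it establishes marginal strong convexity/smoothness (Lemma~\ref{lem:marg-strong-cvx-smooth-fn}, with the moduli identified with the coordinate Lipschitz estimates $W$ and $V$, exactly as you observe), then invokes the \emph{$C$-robust bistability property} (Definition~\ref{defn:rob-bistable}, Lemma~\ref{lem:robustbistability}) to obtain the distance bound of Lemma~\ref{lemma:local-conv-gam}, and finally runs a potential-function argument on $\Phi^{(k)} = f(\L^{(k)},\R^{(k)}) - f^\ast$ to get $\Phi^{(k+1)} \le \eta_0 \Phi^{(k)}$ with $\eta_0 = \frac{2\kappa(1+C\kappa)}{1+2\kappa(1+C\kappa)}$. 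Robust bistability is the single hypothesis that simultaneously plays the roles of your two hardest ingredients: it is an error-bound/KL-type inequality \emph{and} it forces every bistable point to be globally optimal, so no separate saddle-avoidance or landscape argument is needed. You instead decompose the problem into (i) a KL inequality with exponent $\tfrac12$ for the linear rate and (ii) a no-spurious-local-minima plus saddle-avoidance argument à la \cite{bhojanapalli2016global} for global optimality. Your decomposition is arguably more transparent about where the difficulty lies, and your observation that the $\nu$-regularisers make each block subproblem $\nu$-strongly convex (so $\alpha = \nu$ concretely) is sharper than the paper's boundedness argument.

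That said, as a proof your proposal is not yet complete, and you are candid about where: pinning the KL exponent to $\tfrac12$ for this piecewise-quadratic interval loss, and verifying that the benign-landscape results of \cite{bhojanapalli2016global} (stated for symmetric/RIP-type matrix sensing objectives) actually apply to $f$ in \eqref{defSpecific}, are both left as assertions. Neither follows from semialgebraicity alone (which only gives \emph{some} KL exponent, hence possibly a sublinear rate), and randomised \emph{coordinate} descent is not among the methods for which almost-sure strict-saddle avoidance is established off the shelf. To be fair, the paper's own justification of Lemma~\ref{lem:robustbistability} ("the level sets are bounded, whereby we obtain the result") is at least as thin --- bounded level sets do not by themselves yield a finite $C$ in Definition~\ref{defn:rob-bistable}, since that requires the marginal-improvement quantity $2f(\L,\R) - f(\L,\tilde\R) - f(\tilde\L,\R)$ to dominate the suboptimality at every non-optimal point, which is itself a global landscape statement. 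So both arguments ultimately rest on an unverified landscape hypothesis; the paper hides it inside one named property, while you expose it as two separate open steps.
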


The proof is available on-line the appendix and should not be surprising, in light of \cite{bhojanapalli2016global,NIPS2016_6517,jain2017non,boumal2018global,bhojanapalli2018smoothed}.

Building upon this, we can prove a bound on the error in the on-line regime.
In particular, we will show that Algorithm~\ref{alg:SCDM} generates a sequence of matrices $\{(\L_k, \R_k)\}$ that in the large limit of $k \to \infty$ guarantees a bounded tracking error, i.e., $f(\L_k, \R_k; \M_k) \leq f(\L_k^*, \R_k^*; \M_k) + E$. The size of the tracking error $E$ depends on how fast the time-varying matrices change:

\begin{assumption}\label{as:varying}
    The variation of the observation matrix $\M_k$ at two subsequent instant $k$ and $k-1$ is so to guarantee that
    $$
    |f(\L_k,\R_k; \M_k) - f(\L_k,\R_k; \M_{k-1})| \leq e,
    $$
    for all instants $k>0$. 
\end{assumption}

Now, let us bound the error in tracking, i.e., 
when $M_k$ changes over time and we run only a limited number of iterations $\tau$ of our algorithm per time step, before obtaining new inputs. 

\begin{theorem}
    \label{theorem.inexact}
    Let Assumptions~\ref{ass:partition} and \ref{as:varying} hold. 
    Then with probability 1, Algorithm~\ref{alg:SCDM} starting from an all-zero matrices generates a sequence of 
    matrices $\{(\L_k, \R_k)\}$ for which
    \begin{multline}
    f(\L_k, \R_k; \M_k) - f(\L_k^*, \R_k^*; \M_k) \leq \\ \eta_0 (f(\L_{k-1}, \R_{k-1}; \M_{k-1}) - f(\L_{k-1}^*, \R_{k-1}^*; \M_{k-1})) + \eta_0 e,
    \notag
    \end{multline}
    where $\eta_0<1$ is a constant.
    In the limit, 
    \begin{equation}\label{eq.asympt_error}
    \limsup_{k\to \infty} f(\L_k, \R_k; \M_k) - f(\L_k^*, \R_k^*; \M_k) \leq \frac{\eta_0 e}{1-\eta_0} =: E.
    \notag
    \end{equation}
\end{theorem}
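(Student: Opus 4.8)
The plan is to combine the linear convergence rate of Theorem~\ref{thm:gam-msc-mss-proof} with the data-variation bound of Assumption~\ref{as:varying} into a single contractive recursion for the per-step optimality gap, and then to sum the resulting geometric series. Throughout, write $g_k := f(\L_k,\R_k;\M_k) - f(\L_k^*,\R_k^*;\M_k)$ for the gap at time $k$ measured against the minimiser $(\L_k^*,\R_k^*)$ of $f(\cdot,\cdot;\M_k)$, which is attained on a bounded set by Assumption~\ref{ass:partition} together with the Frobenius regularisation in~\eqref{defSpecific}.

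First I would record that running Algorithm~\ref{alg:SCDM} for a fixed budget of sweeps on the frozen input $\M_k$ is exactly the setting of Theorem~\ref{thm:gam-msc-mss-proof}: the $T=O(\log\frac1\epsilon)$ bound is equivalent to a linear rate, so one time-step's worth of iterations, started from the warm point $(\L_{k-1},\R_{k-1})$, contracts the gap relative to $\M_k$,
\begin{equation}
g_k \;=\; f(\L_k,\R_k;\M_k) - f(\L_k^*,\R_k^*;\M_k) \;\leq\; \eta_0\,\big(f(\L_{k-1},\R_{k-1};\M_k) - f(\L_k^*,\R_k^*;\M_k)\big).
\notag
\end{equation}
Choosing the iteration limit $\overline{\tau}$ (and the number of ALS sweeps) large enough forces $\eta_0<1$; this is precisely the ``there exists $\overline{\tau}>0$'' clause inherited from Theorem~\ref{thm:gam-msc-mss-proof}.

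Next I would use Assumption~\ref{as:varying} to transfer the warm-start gap from $\M_k$ back to $\M_{k-1}$. Evaluating $f$ at the fixed point $(\L_{k-1},\R_{k-1})$ gives $f(\L_{k-1},\R_{k-1};\M_k)\leq f(\L_{k-1},\R_{k-1};\M_{k-1})+e$, while optimality of $(\L_{k-1}^*,\R_{k-1}^*)$ for $\M_{k-1}$ controls the drift of the optimal value $f(\L_k^*,\R_k^*;\M_k)$ against $f(\L_{k-1}^*,\R_{k-1}^*;\M_{k-1})$. Substituting these into the display above yields the stated one-step recursion
\begin{equation}
g_k \;\leq\; \eta_0\, g_{k-1} + \eta_0 e .
\notag
\end{equation}
Since Theorem~\ref{thm:gam-msc-mss-proof} holds with probability $1$ at each step and there are only countably many steps, the recursion holds simultaneously for all $k$ with probability $1$. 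Finally, unrolling the affine recursion gives $g_k \leq \eta_0^{k} g_0 + \eta_0 e \sum_{j=0}^{k-1}\eta_0^{\,j}$; letting $k\to\infty$ sends the first term to zero while the second converges to $\eta_0 e/(1-\eta_0)$, so that $\limsup_{k\to\infty} g_k \leq \eta_0 e/(1-\eta_0)=E$, as claimed.

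The step I expect to be the main obstacle is the first one: Theorem~\ref{thm:gam-msc-mss-proof} is stated for the all-zero initialisation, whereas the on-line algorithm warm-starts each step at $(\L_{k-1},\R_{k-1})$, so I must argue that the linear rate, and hence the contraction factor $\eta_0$, is \emph{uniform} over the admissible warm starts rather than tied to one initial point. This is what makes the benign-landscape / global-optimality results cited after Theorem~\ref{thm:gam-msc-mss-proof} essential: they furnish a gradient-dominance (Polyak--Lojasiewicz-type) bound on the bounded feasible set, which upgrades the all-zero rate to a uniform per-step contraction. A secondary nuisance is bookkeeping the additive constant in the transfer step (one versus two applications of Assumption~\ref{as:varying}); any resulting factor can be absorbed into the definition of $e$ without changing the geometric limit $E=\eta_0 e/(1-\eta_0)$.
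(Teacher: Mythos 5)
Your proposal follows essentially the same route as the paper's proof: apply the one-step contraction of Theorem~\ref{thm:gam-msc-mss-proof} from the warm start $(\L_{k-1},\R_{k-1})$, use Assumption~\ref{as:varying} (plus the near-invariance of the optimal value) to convert the gap relative to $\M_k$ into the gap relative to $\M_{k-1}$ plus $\eta_0 e$, and then sum the geometric series. Your flagged concern --- that Theorem~\ref{thm:gam-msc-mss-proof} is stated for the all-zero initialisation while the on-line recursion warm-starts, so the contraction factor $\eta_0$ must be uniform over admissible iterates --- is a genuine subtlety that the paper's own proof passes over silently, so your treatment is if anything more careful than the original.
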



In other words, as time passes, our on-line algorithm generates a sequence of approximately optimal costs 
that eventually reaches the optimal cost \emph{trajectory}, up to an asymptotic bound. 
We bound from above the maximum discrepancy between the approximate optimum and 
the true one at instant $k$, as $k$ goes to infinity. 
The convergence to the bound is linear and the rate is $\eta_0$, and depends on the properties of the cost function, 
while the asymptotic bound depends on how fast the problem is changing over time. 

This is a \emph{tracking} result: we are pursuing a time-varying optimum by a finite number 
of iterations $\tau$ per time-step. 
If we could run a large number of iterations per each time step, then we would be back to a off-line case and we would not have a tracking error. 
This may not, however, be possible in settings, where inputs change faster than one can compute an
iteration of the algorithm.

\section{Experimental Evaluation}
\label{sec:exp}


We have implemented Algorithms \ref{alg:complete} and \ref{alg:SCDM} in C++, and released the implementation\footnote{ https://github.com/jmarecek/OnlineLowRank} under Apache License 2.0.  
Based on limited experimentation, we have decided on the use of a time window of $T = 35$, rank $r = 4$, 
and half-width of the uniform noise $\Delta = 5$.
We have 
used dual simplex from IBM ILOG CPLEX 12.8 as a linear-programming solver for solving solving \eqref{sampledproj} in Algorithm \ref{alg:complete}.
To initialise the $\L_0$ and $\R_0$ in Algorithm \ref{alg:complete}, we have used the matrix completion 
of Algorithms \ref{alg:SCDM} with 1 epoch per frame for 3 passes on each video (4,000 to 32,000 frames), starting from all-zero matrices.
We note that in real-world deployments, such an initialisation may be unnecessary, as the
the number of frames processed will render the initial error irrelevant.

\begin{figure}[t!]
\centering
\includegraphics[width=.95\columnwidth]{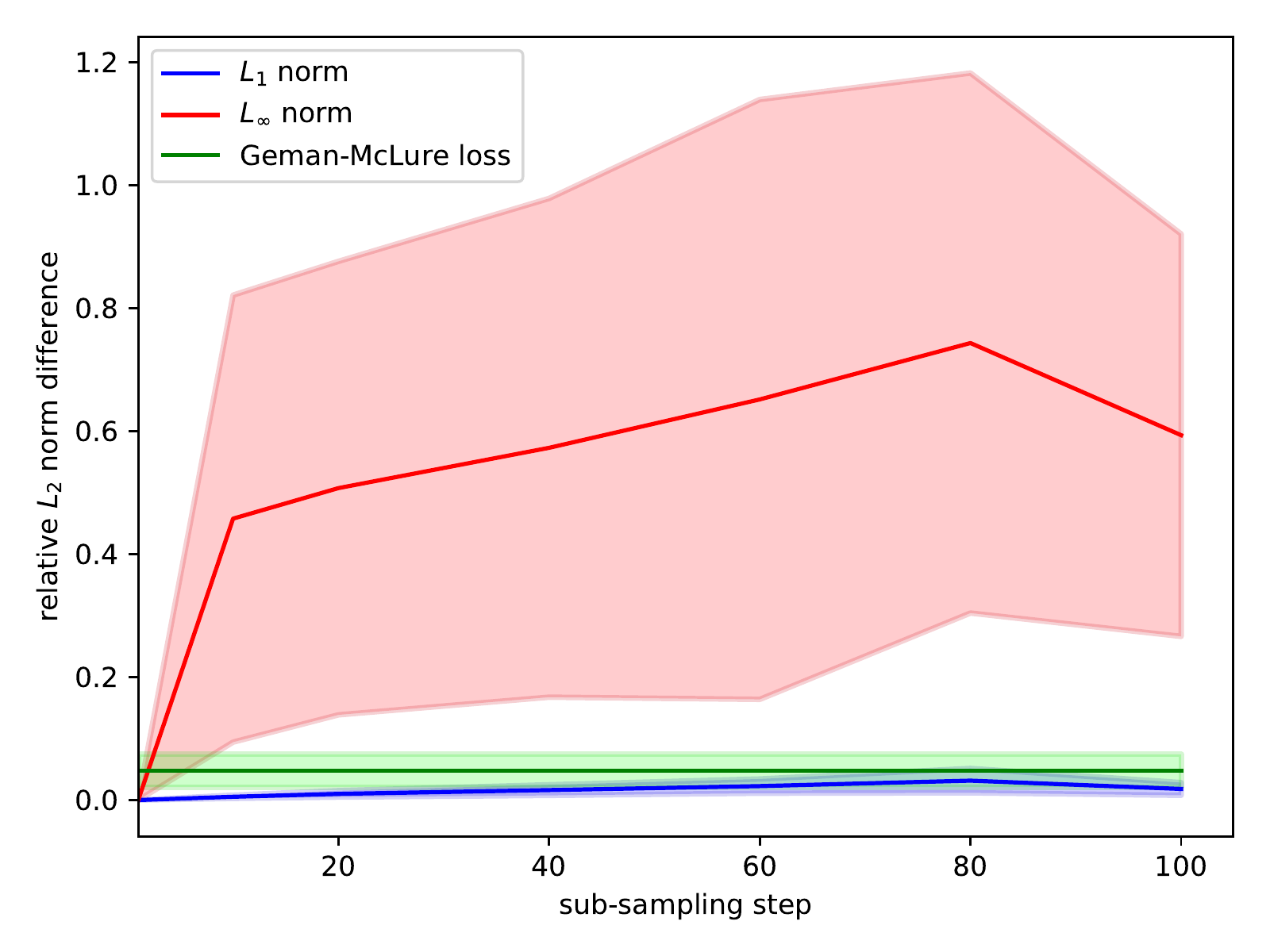}
\includegraphics[width=.95\columnwidth]{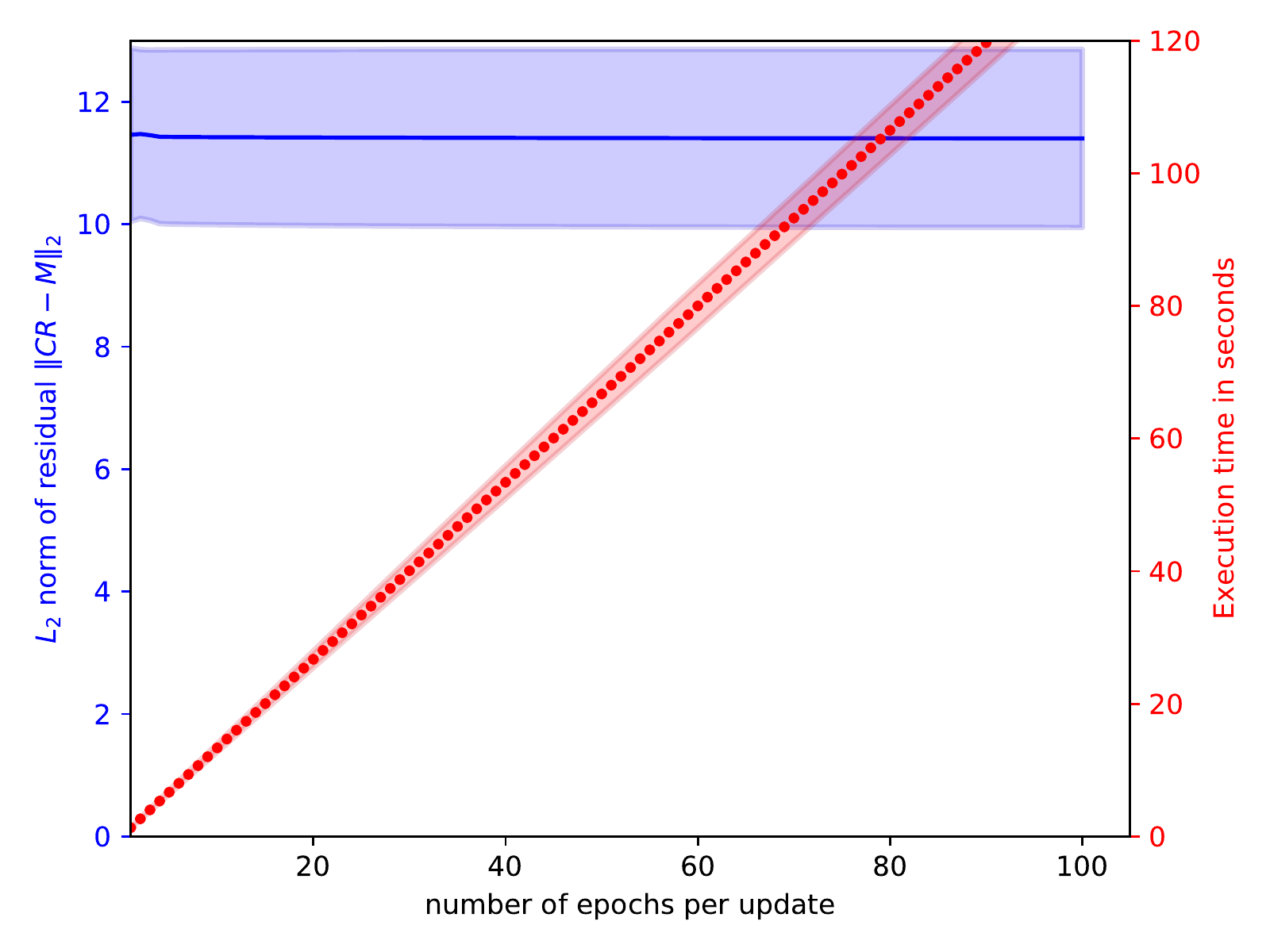}
\caption{Top: Effects of subsampling in the projection \eqref{sampledproj}.
    Bottom: Performance of Algorithm \ref{alg:SCDM} as a function of the number of epochs per update.}
\label{fig1}
\end{figure}

First, let us highlight two aspects of the performance of the algorithm. 
In particular, on the top in Figure~\ref{fig1}, we illustrate the effects of the subsampling on the projection \eqref{sampledproj}.
For projection in $L_1$ and $L_\infty$, we present the $L_2$ norm of the difference $\tilde{\v} - \v$
as a function of the sample period of the subsampling  \eqref{sampledproj}, 
where
$\v$ is the true value obtained in \eqref{proj} without subsampling and
$\tilde{\v}$ is the value obtained in \eqref{sampledproj} with subsampling,
and the sample period is the ratio of the dimensions of $\x_d$ and $\tilde{\x}_d$.
It is clear that $L_1$ is very robust to the subsampling.
This corroborates the PAC bounds of \cite{marecek2018low}
and motivated our choice of $L_1$ with a sampling period of 100 pixels in the code.
For completeness, we also present the performance of the Geman-McLure loss \cite{Sawhney96},
where we do not consider subsampling,
relative to the performance of $L_1$ norm without subsampling.

Next, on the bottom in Figure~\ref{fig1},
we showcase the $L_2$ norm of residual $\L_k\R_k - \M_k$ and the per-iteration run-time of a single-threaded
implementation as a function of the number of epochs per update.
Clearly, the decrease in the residual is very slow beyond one epoch per update,
due to the reasonable initialisation.
On the other hand, there is a linear increase in per-iteration run-time with the
number of epochs of coordinate descent per update. 
This motivated our choice of 1 epoch per update, which allows for real-time
processing at 10 frames per second \textit{without} parallelisation, which can further improve performance as suggested in Algorithm \ref{alg:SCDM}.

\begin{table*}[tp!]
\caption{Results of our Algorithm~\ref{alg:SCDM}, compared to 6 other approaces on the ``baseline'' category of http://changedetection.net, evaluated on the 6 performance metrics of (Goyette et al. 2012). For each performance metric, the best result across the presented methods is highlighted in bold. }
\label{tab:all-baseline}
\begin{center}
{\footnotesize
\begin{tabular}{lcccccc}
\toprule
Approach / Performance metric & Recall & Specificity & FPR
& FNR & Precision & F1     \\
\midrule
LRR\_FastLADMAP \cite{Lin11}  & 0.74694 & 0.93980 & 0.06021 & 0.25306 & 0.28039 & 0.36194 \\
MC\_GROUSE \cite{Balzano13}   & 0.65640 & 0.89692 & 0.10308 & 0.34360 & 0.25425 & 0.31495 \\
OMoGMF \cite{Meng13,Yong18}   & \textbf{0.89943} & 0.98289 & 0.01711 & \textbf{0.10057} & 0.62033 & 0.72611 \\
RPCA\_FPCP \cite{Rodriguez13} & 0.73848 & 0.94733 & 0.05267 & 0.26152 & 0.29994 & 0.37900 \\
ST\_GRASTA \cite{He11}        & 0.45340 & 0.98205 & 0.01795 & 0.54660 & 0.44009 & 0.42367 \\
TTD\_3WD \cite{Oreifej13}     & 0.61103 & 0.97117 & 0.02883 & 0.38897 & 0.35557 & 0.40297 \\
Algorithm~\ref{alg:SCDM} (w/ Geman-McLure)                  & 0.85684 & \textbf{0.99078} & \textbf{0.00922} & 0.14316 & \textbf{0.77210} & \textbf{0.80254} \\
Algorithm~\ref{alg:SCDM} (w/ $L_1$ norm) &
0.84561 & 0.99063 & 0.00937 & 0.15439 & 0.76709 & 0.79421 \\
\bottomrule
\end{tabular}}
\end{center}
\end{table*}

\begin{table*}[tp!]
\caption{Results of our Algorithm~\ref{alg:SCDM}, compared to 3 other approaches on 6 categories of http://changedetection.net, evaluated on the 6 performance metrics of (Goyette et al. 2012).  For each pair of performance metric and category, the best result across the presented methods is highlighted in bold.}
\label{tab:DAEvsOMoGMF}
\begin{center}
{\footnotesize
\begin{tabular}{lcccccc}
\toprule
Approach and category / Performance metric & Recall & Specificity & FPR
& FNR & Precision & F1     \\[6pt]
\textbf{Algorithm~\ref{alg:SCDM} (w/ $L_1$ norm)}: & {} & {} & {} & {} & {} & {} \\
\midrule
badWeather        & 0.86589 & \textbf{0.98814} & 0.01186 & 0.13411 & 0.54689 & 0.64618 \\
baseline          & 0.84561 & \textbf{0.99063} & \textbf{0.00937} & 0.15439 & \textbf{0.76709} & \textbf{0.79421} \\
cameraJitter      & 0.59694 & \textbf{0.95928} & \textbf{0.04072} & 0.40306 & \textbf{0.55402} & \textbf{0.51324} \\
dynamicBackground & 0.46324 & \textbf{0.99677} & \textbf{0.00323} & 0.53676 & \textbf{0.65511} & \textbf{0.49254} \\
nightVideo        & \textbf{0.83646} & 0.87469 & 0.12531 & \textbf{0.16354} & 0.20992 & 0.29481 \\
shadow            & \textbf{0.76158} & \textbf{0.97612} & \textbf{0.02388} & \textbf{0.23842} & \textbf{0.64121} & \textbf{0.68493} \\
\midrule
Overall           & 0.72829 & \textbf{0.96427} & \textbf{0.03573} & 0.27171 & \textbf{0.56237} & \textbf{0.57099} \\[6pt]
\textbf{OMoGMF} \cite{Yong18}: & {} & {} & {} & {} & {} & {} \\
\midrule
badWeather        & \textbf{0.86871} & 0.98939 & 0.01061 & \textbf{0.13129} & \textbf{0.57917} & \textbf{0.67214} \\
baseline          & \textbf{0.89943} & 0.98289 & 0.01711 & \textbf{0.10057} & 0.62033 & 0.72611 \\
cameraJitter      & \textbf{0.85954} & 0.90739 & 0.09261 & \textbf{0.14046} & 0.30567 & 0.44235 \\
dynamicBackground & \textbf{0.87655} & 0.86383 & 0.13617 & \textbf{0.12345} & 0.08601 & 0.15012 \\
nightVideo        & 0.75607 & 0.92372 & 0.07628 & 0.24393 & \textbf{0.23252} & \textbf{0.31336} \\
shadow            & 0.55772 & 0.80276 & 0.03057 & 0.27562 & 0.40539 & 0.37450 \\
\midrule
Overall           & \textbf{0.80300} & 0.91166 & 0.06056 & \textbf{0.16922} & 0.37151 & 0.44643 \\[6pt]
\textbf{ST\_GRASTA} \cite{He11}: & {} & {} & {} & {} & {} & {} \\
\midrule
badWeather        & 0.26555 & 0.98971 & \textbf{0.01029} & 0.73445 & 0.45526 & 0.30498 \\
baseline          & 0.45340 & 0.98205 & 0.01795 & 0.54660 & 0.44009 & 0.42367 \\
cameraJitter      & 0.51138 & 0.91313 & 0.08687 & 0.48862 & 0.23995 & 0.31572 \\
dynamicBackground & 0.41411 & 0.94755 & 0.05245 & 0.58589 & 0.08732 & 0.13736 \\
nightVideo        & 0.42488 & \textbf{0.97224} & \textbf{0.02776} & 0.57512 & \textbf{0.24957} & 0.28154 \\
shadow            & 0.44317 & 0.96681 & 0.03319 & 0.55683 & 0.42604 & 0.41515 \\
\midrule
Overall           & 0.41875 & 0.96192 & 0.03808 & 0.58125 & 0.31637 & 0.31307 \\[6pt]
\textbf{RPCA\_FPCP} \cite{Rodriguez13}: & {} & {} & {} & {} & {} & {} \\
\midrule
badWeather        & 0.82546 & 0.84424 & 0.15576 & 0.17454 & 0.09950 & 0.16687 \\
baseline          & 0.73848 & 0.94733 & 0.05267 & 0.26152 & 0.29994 & 0.37900 \\
cameraJitter      & 0.74452 & 0.84143 & 0.15857 & 0.25548 & 0.18436 & 0.29024 \\
dynamicBackground & 0.69491 & 0.80688 & 0.19312 & 0.30509 & 0.03928 & 0.07134 \\
nightVideos       & 0.79284 & 0.85751 & 0.14249 & 0.20716 & 0.11797 & 0.19497 \\
shadow            & 0.72132 & 0.90454 & 0.09546 & 0.27868 & 0.26474 & 0.36814 \\
\midrule
Overall :         & 0.75292 & 0.86699 & 0.13301 & 0.24708 & 0.16763 & 0.24509 \\[3pt]
\bottomrule
\end{tabular}}
\end{center}
\end{table*}

We have also conducted a number of experiments 
on instances from changedetection.net \cite{goyette2012changedetection}, a benchmark often used to test low-rank approaches.
There, short videos (1,000 to 9,000 frames) 
are supplemented
with ground-truth information of what is foreground and what is background.
These experiments have been run on a single 4-core workstation (Intel Core i7-4800MQ CPU, 16~GB of RAM, RedHat~7.6/64) and results have been deposited\footnote{https://figshare.com/articles/AAAI2020\_results\_zip/10316696} in FigShare.
In Tables~\ref{tab:all-baseline} and ~\ref{tab:DAEvsOMoGMF}, we summarise the results. 
In particular, we present the false positive rate (FPR), false negative rate (FNR), specificity, precision, recall, and the geometric mean of the latter two (F1) of our method and 6 other low-rank approaches, which have been used as reference methods recently \cite{bouwmans2016handbook}.
These reference methods are implemented in \texttt{LRSLibrary} \cite{lrslibrary2015,bouwmans2015} and by the original authors of  \texttt{OMoGMF} \cite{Meng13,Yong18}, and have been used with their default settings.
Out of these, OMoGMF \cite{Yong18} is the most recent and considered to be the most robust. Still, we can improve upon the results of OMoGMF by a considerable margin: the F1 score across the 6 categories is improved by 28\% from 0.44643 to 0.57099, for example.

Further details and results are available in the appendix.
At http://changedetection.net/,
a comparison 
against four dozen other methods is readily available,
although one should like to discount methods tagged as ``supervised'', 
which are trained and tested on one and the same dataset.
A further comparison against dozens of other methods is available in \cite{8398586}.

\section{Conclusions}

We have presented a tracking result for time-varying low-rank models of time-varying matrices,
robust to both uniformly-distributed measurement noise
and arbitrarily-distributed ``sparse'' noise.
This improves upon prior work, as summarised by the recent special issues \cite{8398586,8425660}. 

Our analytical guarantees  
improve upon the state of the art in two ways.
First, we provide a bound on the tracking error in estimation of the time-varying low-rank sub-space, rather than a result restricted to the off-line case.
Second, we do not make restrictive assumptions on RIP properties, incoherence, identical covariance matrices, independence of all outlier supports, or initialisation. 
Broadly speaking, such analyses of \emph{time-varying non-convex optimisation} \cite{8442544,Tang2018ncvx,fattahi2019absence,massicot2019line},  %
seems to be an important direction for further research.

In practice, our use of randomised coordinate descent in alternating least-squares seems much
better suited to high-volume (high-dimensional, high-frequency) data streams than spectral methods
and other alternatives we are aware of. 
When the matrix $\M_k$ does not change quickly, 
performing a fixed number of iterations within an inexact step (\ref{eq:Specific}) upon arrival of a new sample makes it possible to spread the computational load over time, while still recovering a good background model.
Also, our algorithm is easy to implement and optimize. It has very few hyper-parameters, and this simplifies tuning.
Our results are hence practically relevant.

\section*{Acknowledgments}
This research received funding from the European Union
Horizon 2020 Programme (Horizon2020/2014-2020) under
grant agreement number 688380 (project VaVeL).

\bibliographystyle{aaai}
\bibliography{refs,completion,pursuit}

\clearpage
\onecolumn

\section{Proofs}
\label{appA}

\subsection{Properties of the Problem}
\label{sec:analysis1}
    
First, let us see that while $f$ is not convex in both $\L$ and $\R$, it is convex in either $\L$ or $\R$.
    Jain \cite{jain2017non} calls this property \emph{marginal convexity}: A function $f(\L, \R)$ is marginally convex in $\L$, if for every value of $\R \in \Reals^{r\times n}$, the function $f(\cdot,\R): \Reals^{m\times r} \rightarrow \Reals$ is convex.
    
\begin{lemma}[Marginal Convexity]
\label{lem:marg-cvx-fn}
As continuously differentiable function,  $f: \Reals^{m\times r} \times \Reals^{r\times n} \rightarrow \Reals$ is \emph{marginally convex} i.e., for every $\L', \L'' \in \Reals^{m\times r}$, we have
\[
f(\L'', \R) \geq f( \L', \R) + \ip{\nabla_x f( \L', \R)}{ \L'' - \L'},
\]
where $\nabla_x f( \L', \R)$ is the partial gradient of $f$ with respect to its first variable at the point $( \L', \R)$, and likewise for $\R$.
\end{lemma}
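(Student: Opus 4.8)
The plan is to read off the claim from the explicit form of $f$ in \eqref{defSpecific}. Since $f$ is assumed continuously differentiable, the asserted inequality is exactly the first-order characterisation of convexity of the map $\L \mapsto f(\L, \R)$ for each fixed $\R$; conversely, establishing that this map is convex immediately yields the inequality. Thus it suffices to show that $f(\cdot, \R)$ is convex for every fixed $\R$, and, by the symmetric role of the two factors in \eqref{defSpecific}, the analogous statement for $\R \mapsto f(\L, \R)$ follows by an identical argument.

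To prove convexity in $\L$, I would decompose $f$ into its summands and check each. The loss $\ell(x) = (\max\{x,0\})^2$ is convex: its derivative $\ell'(x) = 2\max\{x,0\}$ is continuous and nondecreasing, so $\ell$ is in fact $C^1$ and convex. For fixed $\R$, each inner argument $\underline{M}_{k,ij} - \L_{i:}\R_{:j}$ and $\L_{i:}\R_{:j} - \overline{M}_{k,ij}$ is an affine function of the entries of $\L$. Since the composition of a convex function with an affine map is convex, and finite sums of convex functions are convex, the two data-fidelity sums in \eqref{defSpecific} are convex in $\L$. The regulariser $\tfrac{\nu}{2}\|\L\|_F^2$ is ($\nu$-strongly) convex, while $\tfrac{\nu}{2}\|\R\|_F^2$ is constant in $\L$; adding these preserves convexity. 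Hence $\L \mapsto f(\L, \R)$ is convex (indeed $\nu$-strongly convex), and the gradient inequality follows.

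The only point requiring care is the behaviour of $\ell$ at its kink $x = 0$: the hinge $\max\{x,0\}$ is merely convex and nonsmooth, but squaring it yields a $C^1$ function, which is precisely what makes $\nabla_x f(\L', \R)$ well-defined and the first-order inequality both meaningful and globally valid. I expect this to be the main (and essentially only) obstacle; everything else is the standard convexity calculus of composition with affine maps and summation. Finally I would emphasise that the result is \emph{marginal} convexity in each factor separately, not joint convexity in $(\L, \R)$, consistent with the bilinear coupling $\L_{i:}\R_{:j}$ appearing in \eqref{defSpecific}.
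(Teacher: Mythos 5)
Your argument is correct and is precisely the "simple calculus" the paper invokes as its entire proof: convexity of $\ell(x)=(\max\{x,0\})^2$ composed with maps affine in $\L$ (for fixed $\R$), plus the convex Frobenius regulariser, gives (strong) marginal convexity, and the displayed inequality is the first-order characterisation for the $C^1$ function $f(\cdot,\R)$. Your write-up simply fills in the details the paper omits, including the correct observation that the kink of the hinge disappears upon squaring, so $f$ is genuinely $C^1$.
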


\begin{proof}
By simple calculus.
\end{proof}

Next, let us extend the reasoning of Marecek et al. \cite{marecek2017matrix} to further properties of the 
function restricted to only $\L$ or only $\R$. 
Jain \cite[Section 4.4]{jain2017non} calls a continuously differentiable function $f: \Reals^{m\times r} \times \Reals^{r\times n} \rightarrow \Reals$  (uniformly) $\alpha$-marginally strongly convex (MSC) in $\L$ if for all $\R$, the function $f(\L, \R)$ is $\alpha$ strongly convex for the constant $\R$.
Likewise for (uniformly) $\beta$-marginally strongly smooth (MSS) functions.
The textbook example \cite[Figure 4.1]{jain2017non} is $f: \Reals \times \Reals \rightarrow \Reals$, $f(x, y) = x \cdot y$.
Notice the similarity to the Restricted Isometry Property (RIP) of \cite{candes2008restricted}.

\begin{lemma}[MSC/MSS]
\label{lem:marg-strong-cvx-smooth-fn}
There are finite $\alpha, \beta$, such that 
the function $f(\cdot, \R): \Reals^{m\times r} \rightarrow \Reals$ is $\alpha$-strongly convex and $\beta$-strongly smooth,
i.e., 
for every value of $\R \in \Reals^{r\times n}$, 
for every $\L',\L' \in \Reals^{m\times r}$, we have
\begin{align*}
\frac{\alpha}{2}\norm{ \L'' - \L'}_2^2 & \leq f( \L'', \R) - f( \L', \R) - \ip{g}{ \L'' - \L'} \\ & \leq \frac{\beta}{2}\norm{ \L'' - \L'}_2^2,
\end{align*}
where $g = \nabla_x f( \L', \R)$ is the partial gradient of $f$ with respect to its first variable at the point $(\L',\R)$. Likewise, the function $f(\L, \cdot): \Reals^{n\times r} \rightarrow \Reals$ is $\alpha'$-strongly convex and $\beta'$-strongly smooth.
\end{lemma}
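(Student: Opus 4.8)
The plan is to read both constants off the structure of $f(\cdot,\R)$ as a finite sum of one-dimensional convex losses composed with linear maps, plus the Frobenius regulariser. First I would record two elementary facts about $\ell(z)=(\max\{z,0\})^2$: it is convex and continuously differentiable with $\ell'(z)=2\max\{z,0\}$, and this derivative is monotone and $2$-Lipschitz, i.e., $0\le \ell'(a)-\ell'(b)\le 2(a-b)$ whenever $a\ge b$. These deliver, respectively, convexity of each loss term and a uniform Lipschitz bound on each loss gradient, which is the only one-dimensional input the lemma requires.

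For the lower bound I would split $f(\cdot,\R)$ into the sum of the loss terms and the regulariser $\tfrac{\nu}{2}\norm{\L}_F^2$. Each loss term is the composition of the convex $\ell$ with an affine map $\L\mapsto \pm\,\L_{i:}\R_{:j}$, hence convex, so together they contribute nonnegatively to the first-order gap; the regulariser is exactly $\nu$-strongly convex. A convex function plus a $\nu$-strongly convex function is $\nu$-strongly convex, so the left inequality holds with $\alpha=\nu$, uniformly in $\R$.

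For the upper bound I would control the difference of partial gradients directly. Writing $g(\L)=\nabla_{\L}f(\L,\R)$, the pair of losses attached to each index $(i,j)$ contributes a gradient of the form $\pm\,\ell'(\cdot)\,\R_{:j}$ on row $i$; because $\underline{M}_{k,ij}\le\overline{M}_{k,ij}$, at most one of the two is ever active at a given point, so by the $2$-Lipschitz bound on $\ell'$ the contribution of column $j$ to $g(\L'')-g(\L')$ is dominated by $2\,\R_{:j}\R_{:j}^{\transp}(\L''-\L')$ on that row. Summing over $j$ and adding the regulariser term $\nu(\L''-\L')$ gives, blockwise over rows, a difference $g(\L'')-g(\L')$ governed by $(\nu\I+2\,\R\R^{\transp})(\L''-\L')$, whence the right inequality with $\beta=\nu+2\norm{\R}_2^2$. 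The symmetric computation in the second argument gives $\alpha'=\nu$ and $\beta'=\nu+2\norm{\L}_2^2$.

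The main obstacle is that $f(\cdot,\R)$ is only $C^1$ and not $C^2$, since $\ell''$ jumps from $0$ to $2$ at the origin, so I cannot argue through a Hessian. The clean way around this is to avoid second derivatives entirely and argue from the first-order characterisations above: monotonicity of $\ell'$ on the strong-convexity side and the Lipschitz bound on $\ell'$ on the smoothness side, both lifted to the composition through the chain rule for $C^1$ functions. A secondary subtlety is uniformity: $\alpha=\nu$ is genuinely uniform, whereas $\beta$ depends on $\norm{\R}_2$ and is therefore finite only for each fixed $\R$ (and uniform over bounded sets of $\R$); I would note that this is harmless, since the Frobenius regularisation confines the iterates of Algorithm~\ref{alg:SCDM} to a bounded set, so a single finite $\beta$ suffices along the trajectory.
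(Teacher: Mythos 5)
Your proof is correct and takes essentially the same route as the paper's: both read the constants off the additive structure of $f$ (the $\tfrac{\nu}{2}\norm{\L}_F^2$ regulariser supplies the strong-convexity modulus, the Lipschitz derivative of $\ell$ composed with the affine maps $\L\mapsto\L_{i:}\R_{:j}$ supplies the smoothness modulus), and both resolve the non-uniformity of $\beta$ in $\R$ by appealing to boundedness of the level sets. Your version is in fact more complete than the paper's, which merely records the coordinate-wise Lipschitz moduli $W_{i\hat r}=\mu+\sum_j \R^2_{\hat r j}$ and $V_{\hat r j}=\mu+\sum_i \L^2_{i\hat r}$ and leaves the strong-convexity half (your $\alpha=\nu$ from the regulariser) implicit.
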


\begin{proof}[Proof of Lemma\ref{lem:marg-strong-cvx-smooth-fn} ]
Notice that $W_{i\hat{r}}$, the modulus of Lischitz continuity of the gradient of $f$ restricted to the $\L_{i \hat r}$
	sampled is:
\begin{align}
W_{k, i\hat{r}} = \mu + \sum_{(j, v)} \R^2_{k, \hat r j}
\end{align}
where the superscript denotes squaring, rather than an iteration index, which we omit for brevity.
Considering the level set is bounded, $W_{k,j\hat{r}}$ is bounded and we have the result.
Similarly $V_{ \hat{r} j}$, the modulus of Lischitz continuity of the gradient of $f$ restricted to the $\R_{ k, i \hat r }$ is:
\begin{align}
V_{k, \hat{r} j} = \mu +  \sum_{(i, v)} \L^2_{k, i \hat r},
\end{align}
where again, the superscript denotes squaring, rather than an iteration index.
\end{proof}


Next, let us consider some more definitions of \cite{jain2017non}. For any $\R$, we say that $\tilde \L$ is a marginally optimal coordinate with respect to $\R$, and use the shorthand $\tilde \L \in \mopt(\R)$, if $f(\tilde \L, \R) \leq f( \L, \R)$ for all $\L$. Similarly for any $\L$,  $\tilde \R \in \mopt(\L)$ if $\tilde \R$ is a marginally optimal coordinate with respect to $\L$. Then:

\begin{definition}[Bistable Point of \cite{jain2017non}]
\label{defn:bistable}
Given a function $f$ over two variables constrained within the sets $\cX,\cY$ respectively, a point $( \L, \R) \in \cX\times\cY$ is considered a \emph{bistable} point if $y \in \mopt( \L)$ and $x \in \mopt(y)$ i.e., both coordinates are marginally optimal with respect to each other.
\end{definition}

\begin{lemma}[Jain et al. \cite{jain2017non}]
A point $( \L, \R)$ is bistable with respect to a continuously differentiable function 
$f: \Reals^{m\times r}  \times \Reals^{r\times n}$ that is marginally convex in both its variables if 
and only if  $\nabla f( \L, \R) = \vzero$.
\end{lemma}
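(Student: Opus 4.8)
The plan is to prove the two implications of the biconditional separately, reducing each to the standard first-order characterisation of minimisers. First I would observe that the full gradient splits blockwise as $\nabla f(\L,\R) = (\nabla_\L f(\L,\R),\, \nabla_\R f(\L,\R))$, so that $\nabla f(\L,\R) = \vzero$ holds precisely when both partial gradients vanish. This decomposition lets me handle the two coordinates independently and recombine at the end, mirroring the structure of the bistability definition, which is itself a conjunction of two marginal-optimality conditions.

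For the direction $\nabla f(\L,\R) = \vzero \Rightarrow$ bistable, I would assume both partials vanish and invoke marginal convexity (Lemma~\ref{lem:marg-cvx-fn}). Because $f(\cdot,\R)$ is convex and continuously differentiable, the vanishing of $\nabla_\L f(\L,\R)$ is exactly the first-order condition certifying that $\L$ globally minimises $f(\cdot,\R)$ over $\Reals^{m\times r}$, i.e.\ $\L \in \mopt(\R)$. The symmetric argument using $\nabla_\R f(\L,\R) = \vzero$ and marginal convexity in the second variable gives $\R \in \mopt(\L)$. By Definition~\ref{defn:bistable}, $(\L,\R)$ is then bistable.

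For the converse, bistable $\Rightarrow \nabla f(\L,\R) = \vzero$, I would argue that $\L \in \mopt(\R)$ means $\L$ globally minimises the differentiable map $f(\cdot,\R)$ on all of $\Reals^{m\times r}$, so Fermat's stationarity condition forces $\nabla_\L f(\L,\R) = \vzero$; here convexity is not even required, only differentiability and the fact that the minimisation is unconstrained. Likewise $\R \in \mopt(\L)$ yields $\nabla_\R f(\L,\R) = \vzero$, and combining the two blocks gives $\nabla f(\L,\R) = \vzero$.

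The argument is elementary, and the only subtlety worth flagging is the asymmetric role of convexity across the two directions. In the converse a vanishing partial gradient follows from optimality alone, but in the forward direction a vanishing partial gradient only certifies a critical point unless convexity is present; marginal convexity is precisely what upgrades ``critical'' to ``globally marginally optimal,'' which is what the bistability definition demands. The main thing to keep clean is therefore the domain: since the lemma is stated over the full spaces with no active constraints, stationarity takes the form $\nabla f = \vzero$ rather than a variational inequality, so no projection or normal-cone condition enters.
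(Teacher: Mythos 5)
Your proof is correct. Note that the paper does not actually supply its own argument for this lemma: it is imported verbatim from Jain et al.\ and the \texttt{proof} environment that immediately follows it in the source is explicitly the proof of a \emph{different} statement (Lemma~\ref{lem:robustbistability}, on bounded level sets), so there is nothing in the paper to diverge from. Your argument is the canonical one and is complete: the forward direction is exactly the first-order characterisation of convexity from Lemma~\ref{lem:marg-cvx-fn} (setting the partial gradient to zero in the gradient inequality yields $f(\L'',\R)\geq f(\L,\R)$ for all $\L''$, i.e.\ $\L\in\mopt(\R)$, and symmetrically for $\R$), and the converse is Fermat's stationarity condition for an unconstrained differentiable minimiser, which as you correctly flag needs no convexity at all. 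Your closing remark about the domain is also the right thing to keep an eye on, since the paper's Definition~\ref{defn:bistable} is phrased over constraint sets $\cX\times\cY$ while the lemma is stated over the full spaces; the equivalence with $\nabla f(\L,\R)=\vzero$ holds only in the unconstrained case, which is the one asserted here.
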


\begin{proof}[Proof of Lemma \ref{lem:robustbistability}]
Notice that each element of the matrix is bounded both from above and from below. The level sets are hence bounded, whereby we obtain the result.
\end{proof}


Then, we can restate Theorem 1 of \cite{marecek2017matrix}:

\begin{theorem}[Based on Theorem 1 in Marecek et al. \cite{marecek2017matrix}]
\label{lemma:bistable-stat}
For any $\overline \tau > 0$ and  $\hatSr, \hatSc$ sampled uniformly at random,
the limit point 
$\liminf_{T \to \infty} ( \L^{T,\overline \tau}_k, \R^{T,\overline \tau}_k)$ of Algorithm~\ref{alg:SCDM} 
is bistable with probability 1.
\end{theorem}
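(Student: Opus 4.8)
The plan is to reduce the statement to the convergence theorem of Marecek et al. \cite{marecek2017matrix} by checking that its hypotheses hold for the specific objective $f$ in \eqref{defSpecific}. Note first that freezing $k$ (equivalently, letting the while-loop of Algorithm~\ref{alg:SCDM} run forever because $\M_{k+1}$ never arrives) reduces the setting to the time-invariant problem \eqref{eq:Specific}, so the off-line analysis applies. Three ingredients are then needed. First, marginal convexity of $f$ in each block separately is exactly Lemma~\ref{lem:marg-cvx-fn}. Second, by Lemma~\ref{lem:marg-strong-cvx-smooth-fn} the restricted Lipschitz moduli $W_{k,i\hat r}$ and $V_{k,\hat r j}$ appearing in the step sizes \eqref{eq:delta_L}--\eqref{eq:delta_R} are finite; crucially, by Assumption~\ref{ass:partition} every entry of the iterate is trapped between $\underline{M}_{k,ij}$ and $\overline{M}_{k,ij}$, so the level sets are bounded and these moduli remain uniformly bounded along the whole trajectory. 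Third, $f \geq 0$, since it is a sum of squared hinge terms plus a Frobenius regulariser, so the objective is bounded below.

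With these in hand, I would run the standard block-coordinate-descent argument. For a single sampled coordinate, marginal strong smoothness gives the quadratic upper bound $f(\L_k + \delta\,\P_{i\hat r},\R_k) \leq f(\L_k,\R_k) + \delta\,\ip{\nabla_{\L} f}{\P_{i\hat r}} + \tfrac{W_{k,i\hat r}}{2}\delta^2$; the step \eqref{eq:delta_L} is precisely the minimiser of the right-hand side, so each update decreases $f$ by at least $\tfrac{1}{2 W_{k,i\hat r}}\ip{\nabla_{\L} f}{\P_{i\hat r}}^2 \geq 0$, and symmetrically for the $\R$-updates via \eqref{eq:delta_R}. Hence the sequence $\{f(\L_k^{T,\tau},\R_k^{T,\tau})\}$ is monotonically non-increasing and bounded below, so it converges; telescoping the per-step decreases shows that the sum over all steps of the squared sampled partial-gradient components is finite.

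I would then pass to the probabilistic conclusion. Since the inner index $\hat r$ is drawn uniformly from $\{1,\dots,r\}$ and the row/column sets $\hatSr,\hatSc$ are sampled uniformly, every coordinate is updated infinitely often with probability $1$; combined with the summability just obtained (and the uniform boundedness of the moduli), each partial-gradient component must vanish along the trajectory almost surely. Because the algorithm alternates the $\L$-sweep and the $\R$-sweep, this yields both $\liminf_T \norm{\nabla_{\L} f(\L_k^{T,\overline\tau},\R_k^{T,\overline\tau})} = 0$ and $\liminf_T \norm{\nabla_{\R} f} = 0$, so $\nabla f = \vzero$ at the limit point. Finally, since $f$ is marginally convex in both variables, Lemma~\ref{lem:robustbistability} identifies a zero-gradient point with a bistable point, which gives the claim.

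The main obstacle is this last inferential step: upgrading ``the summable per-step decrease is small'' to ``the gradient actually converges to zero with probability $1$''. This is where the randomised sampling must be handled carefully — one must rule out the iteration stalling on a subset of coordinates, and control the coupling between the random choice of $\hat r$ and the (bounded but trajectory-dependent) moduli $W_{k,i\hat r},V_{k,\hat r j}$. I expect the cleanest route is a supermartingale argument in the spirit of Robbins--Siegmund applied to the non-negative objective sequence $f(\L_k^{T,\tau},\R_k^{T,\tau})$, exactly as in \cite{marecek2017matrix}; everything else is the routine calculus already packaged in Lemmas~\ref{lem:marg-cvx-fn}--\ref{lem:robustbistability}.
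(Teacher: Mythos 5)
Your proposal is correct and follows essentially the same route as the paper, which simply defers to the monotone-descent-plus-Robbins--Siegmund argument of Theorem 1 in \cite{marecek2017matrix}; your reconstruction of the ingredients (marginal convexity, finite restricted Lipschitz moduli $W_{k,i\hat r}, V_{k,\hat r j}$, a non-negative objective, a guaranteed per-step decrease from the quadratic upper bound, and the almost-sure vanishing of the sampled partial gradients under uniform sampling) is exactly the intended argument. Two small slips worth fixing: the identification of zero-gradient points with bistable points is the unnumbered lemma of Jain et al.\ stated in the appendix, not Lemma~\ref{lem:robustbistability} (which asserts the $C$-robust bistability property), and the iterates are not literally trapped in $[\underline{M}_{k,ij},\overline{M}_{k,ij}]$ --- the boundedness of the level sets (and hence of the moduli) comes from the Frobenius regulariser $\tfrac{\nu}{2}\|\L_k\|_F^2+\tfrac{\nu}{2}\|\R_k\|_F^2$ combined with the monotone descent you establish.
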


The proof follows that of Theorem 1 in \cite{marecek2017matrix}. There, however, the analysis of \cite{marecek2017matrix} ends.

	\subsection{The Limit Point}
	\label{sec:limitPoint}

Next, consider further properties of the limit point under the assumptions above. 
To do so, we present some more definitions of Jain  \cite{jain2017non}:  

\begin{definition}[Robust Bistability Property of \cite{jain2017non}]
\label{defn:rob-bistable}
A function $f: \Reals^{m\times r}  \times \Reals^{r\times n} \rightarrow \Reals$ satisfies the $C$-robust bistability property if for some $C > 0$, for every $( \L, \R) \in \Reals^{m\times r}  \times \Reals^{r\times n}$, $\tilde \R \in \mopt( \L)$ and $\tilde \L \in \mopt(\R)$, we have
\begin{align}
f( \L, \R^\ast) + f( \L^\ast, \R) - 2f^\ast \leq C\cdot \left( 2f( \L, \R) - f( \L,\tilde \R) - f(\tilde \L, \R) \right).
\end{align}
\end{definition}

Subsequently:

\begin{lemma}
\label{lem:robustbistability}
Under Assumption \ref{ass:partition}, there exists a finite $C > 0$, such that the function $f$ \eqref{eq:Specific} satisfies the $C$-robust bistability property.
\end{lemma}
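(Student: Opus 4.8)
The plan is to verify Definition~\ref{defn:rob-bistable} directly, following the template of \cite{jain2017non} and feeding in the moduli supplied by Lemma~\ref{lem:marg-strong-cvx-smooth-fn}. Write $f^\ast = f(\L^\ast,\R^\ast)$ for the global optimum, and note that since $(\L^\ast,\R^\ast)$ is a global minimiser we have $\L^\ast \in \mopt(\R^\ast)$ and $\R^\ast \in \mopt(\L^\ast)$, so the first-order conditions $\nabla_{\L} f(\L^\ast,\R^\ast) = \vzero$ and $\nabla_{\R} f(\L^\ast,\R^\ast) = \vzero$ hold; moreover strong convexity makes each marginal minimiser unique. First I would reduce both sides of the claimed inequality to squared distances. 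For the left-hand side, marginal strong smoothness (the $\beta,\beta'$ upper bounds of Lemma~\ref{lem:marg-strong-cvx-smooth-fn}) evaluated at the minimisers $\L^\ast,\R^\ast$ gives
\[
f(\L,\R^\ast) + f(\L^\ast,\R) - 2f^\ast \;\le\; \tfrac{\beta}{2}\norm{\L - \L^\ast}_2^2 + \tfrac{\beta'}{2}\norm{\R - \R^\ast}_2^2 .
\]
For the right-hand bracket, marginal strong convexity (the $\alpha,\alpha'$ lower bounds) evaluated at $\tilde\L \in \mopt(\R)$ and $\tilde\R \in \mopt(\L)$ gives
\[
2f(\L,\R) - f(\L,\tilde\R) - f(\tilde\L,\R) \;\ge\; \tfrac{\alpha}{2}\norm{\L - \tilde\L}_2^2 + \tfrac{\alpha'}{2}\norm{\R - \tilde\R}_2^2 .
\]

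The crux is to relate the distances to the global optimum, $\norm{\L-\L^\ast}_2$ and $\norm{\R-\R^\ast}_2$, to the distances to the marginal optima, $\norm{\L-\tilde\L}_2$ and $\norm{\R-\tilde\R}_2$. The key geometric fact I would establish is that the marginal-minimiser maps are Lipschitz. Writing $\tilde\L = \mopt(\R)$ and using $\L^\ast = \mopt(\R^\ast)$, the optimality conditions $\nabla_{\L} f(\tilde\L,\R) = \vzero = \nabla_{\L} f(\L^\ast,\R^\ast)$ together with $\alpha$-strong convexity of $f(\cdot,\R)$ yield
\[
\alpha \norm{\tilde\L - \L^\ast}_2 \le \norm{\nabla_{\L} f(\L^\ast,\R) - \nabla_{\L} f(\L^\ast,\R^\ast)}_2 \le L_{1}\,\norm{\R-\R^\ast}_2 ,
\]
where $L_{1}$ bounds the mixed second derivative $\nabla^2_{\L\R} f$; by Assumption~\ref{ass:partition} the iterates remain in a bounded, hence compact, level set on which $L_{1}$ is finite. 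A symmetric argument gives $\norm{\tilde\R-\R^\ast}_2 \le (L_{2}/\alpha')\norm{\L-\L^\ast}_2$.

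Combining these Lipschitz bounds with $\norm{\L-\L^\ast}_2^2 \le 2\norm{\L-\tilde\L}_2^2 + 2\norm{\tilde\L-\L^\ast}_2^2$ and its $\R$-counterpart produces a pair of coupled inequalities in the two squared distances to the optimum, which I would solve to bound them by a constant multiple of $\norm{\L-\tilde\L}_2^2 + \norm{\R-\tilde\R}_2^2$; substituting back then delivers a finite $C$, since $\alpha,\alpha',\beta,\beta',L_{1},L_{2}$ are all finite on the compact level set. The main obstacle is precisely this coupling step: the triangle-inequality substitution only closes when the product of the cross-Lipschitz ratios $L_1/\alpha$ and $L_2/\alpha'$ is small enough to keep the resulting $2\times 2$ system nondegenerate, which is not automatic. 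I expect the clean way around this is to exploit that the Frobenius-regularised problem admits no spurious bistable points in the sense of \cite{bhojanapalli2016global,jain2017non} — equivalently, every bistable point is a global minimiser — so that the numerator and denominator of the ratio in Definition~\ref{defn:rob-bistable} vanish simultaneously; a compactness argument on the level set then furnishes a finite supremum $C$ without any smallness condition on the cross-Lipschitz constants.
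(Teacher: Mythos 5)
Your proposal ends, after an honest and well-reasoned detour, exactly where the paper's own proof begins and ends: the paper's entire argument is that the entries are bounded, hence the level sets are bounded, ``whereby we obtain the result'' --- i.e., precisely the compactness fallback in your final sentence. So you have reconstructed the paper's argument and, more usefully, identified the step it silently skips. That step is a genuine gap, in your write-up and in the paper's: a supremum of the ratio in Definition~\ref{defn:rob-bistable} over a compact level set is finite only if the denominator $2f(\L,\R)-f(\L,\tilde\R)-f(\tilde\L,\R)$ vanishes nowhere that the numerator $f(\L,\R^\ast)+f(\L^\ast,\R)-2f^\ast$ is positive, i.e.\ only if every bistable point is globally optimal. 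You say you ``expect'' this to hold for the Frobenius-regularised objective, but it does not. Take $(\L,\R)=(\vzero,\vzero)$: with one factor zero the product $\L\R$ is identically $\vzero$, so the data terms in \eqref{eq:Specific} are independent of the other factor and the regulariser forces $\mopt(\vzero)=\{\vzero\}$ in both coordinates; hence $(\vzero,\vzero)$ is bistable, $\tilde\L=\tilde\R=\vzero$, and the right-hand side of the robust-bistability inequality is exactly zero there. The left-hand side $f(\vzero,\R^\ast)+f(\L^\ast,\vzero)-2f^\ast$ is nonnegative by global optimality of $(\L^\ast,\R^\ast)$ and is strictly positive unless $(\vzero,\R^\ast)$ and $(\L^\ast,\vzero)$ are themselves global minimisers, which is not the generic case. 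No finite $C$ can satisfy the inequality at such a point, so the lemma as stated needs an additional hypothesis (a restriction to a neighbourhood of the optimum, or an RIP/incoherence-type condition as in the works the paper cites immediately after the lemma), not just Assumption~\ref{ass:partition}.

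Your first, quantitative route --- bounding the left side above by $\tfrac{\beta}{2}\norm{\L-\L^\ast}_2^2+\tfrac{\beta'}{2}\norm{\R-\R^\ast}_2^2$ via strong smoothness, the right bracket below by $\tfrac{\alpha}{2}\norm{\L-\tilde\L}_2^2+\tfrac{\alpha'}{2}\norm{\R-\tilde\R}_2^2$ via strong convexity, and coupling them through Lipschitz continuity of the marginal-minimiser maps --- is the correct instinct and is how such constants are actually produced in \cite{jain2017non}. You are also right that it only closes when the cross-coupling ratio $L_1L_2/(\alpha\alpha')$ is small, which for this objective is a condition on $\nu$ relative to the data that neither the paper nor Assumption~\ref{ass:partition} supplies. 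In short: your diagnosis of where the difficulty lies is sharper than the paper's treatment, but neither your proposal nor the paper's two-sentence proof establishes the lemma as stated.
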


Much more detailed results, bounding the constant $C$, are available in many regimes, e.g., 
when each element of the matrix is sampled with a probability larger than a certain instance-specific $p$
from a certain ensemble \cite{cheng2018non},
and more generally when one allows from a certain smoothing \cite{bhojanapalli2016global,bhojanapalli2018smoothed}.
Further, one can use the results of \cite{Carmanis2017} to prove its satisfaction under the 
Restricted Isometry Property (RIP) of \cite{candes2008restricted}.

Next, let us state a technical lemma:


\begin{lemma}[Based on Lemma 4.4 of \cite{jain2017non}]
\label{lemma:local-conv-gam}
Under Assumption \ref{ass:partition},
for any $( \L, \R) \in \Reals^{m\times r}  \times \Reals^{r\times n}$, $\tilde \R \in \mopt( \L)$ 
and $\tilde \L \in \mopt(\R)$,
\begin{align}
\norm{ \L- \L^\ast}_2^2 + \norm{\R-\R^\ast}_2^2 \leq \frac{C\beta}{\alpha} \left( \norm{ \L-\tilde \L}_2^2 + \norm{\R -\tilde \R}_2^2 \right)
\end{align}
\end{lemma}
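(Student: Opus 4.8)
The plan is to sandwich the two sides of the $C$-robust bistability inequality from Lemma~\ref{lem:robustbistability} between a strong-convexity lower bound and a strong-smoothness upper bound, both supplied by Lemma~\ref{lem:marg-strong-cvx-smooth-fn}.

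First I would use that the global optimum $(\L^\ast,\R^\ast)$ minimises the unconstrained differentiable $f$, so $\nabla f(\L^\ast,\R^\ast) = \vzero$ and in particular both partial gradients vanish there. Marginal $\alpha$-strong convexity of $f(\cdot,\R^\ast)$ and $\alpha'$-strong convexity of $f(\L^\ast,\cdot)$, applied at these stationary points, then give
\begin{align*}
f(\L,\R^\ast) - f^\ast & \geq \tfrac{\alpha}{2}\norm{\L - \L^\ast}_2^2, \\
f(\L^\ast,\R) - f^\ast & \geq \tfrac{\alpha'}{2}\norm{\R - \R^\ast}_2^2.
\end{align*}
Adding these and writing $\alpha$ for $\min\{\alpha,\alpha'\}$ lower-bounds the left-hand side of the robust bistability inequality by $\tfrac{\alpha}{2}\big(\norm{\L - \L^\ast}_2^2 + \norm{\R - \R^\ast}_2^2\big)$.

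Next I would bound its right-hand side from above. Because $\tilde\L \in \mopt(\R)$ is the minimiser of the marginally convex, unconstrained map $f(\cdot,\R)$ (Lemma~\ref{lem:marg-cvx-fn}), its partial gradient $\nabla_x f(\tilde\L,\R)$ vanishes; strong smoothness evaluated at $\tilde\L$ then yields $f(\L,\R) - f(\tilde\L,\R) \leq \tfrac{\beta}{2}\norm{\L - \tilde\L}_2^2$, and symmetrically $f(\L,\R) - f(\L,\tilde\R) \leq \tfrac{\beta'}{2}\norm{\R - \tilde\R}_2^2$ using $\tilde\R \in \mopt(\L)$. Summing and writing $\beta$ for $\max\{\beta,\beta'\}$ bounds $2f(\L,\R) - f(\tilde\L,\R) - f(\L,\tilde\R)$ above by $\tfrac{\beta}{2}\big(\norm{\L - \tilde\L}_2^2 + \norm{\R - \tilde\R}_2^2\big)$.

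Chaining the two estimates through the robust bistability inequality $f(\L,\R^\ast) + f(\L^\ast,\R) - 2f^\ast \leq C\big(2f(\L,\R) - f(\L,\tilde\R) - f(\tilde\L,\R)\big)$ gives $\tfrac{\alpha}{2}\big(\norm{\L - \L^\ast}_2^2 + \norm{\R - \R^\ast}_2^2\big) \leq \tfrac{C\beta}{2}\big(\norm{\L - \tilde\L}_2^2 + \norm{\R - \tilde\R}_2^2\big)$, and multiplying through by $2/\alpha$ produces the stated bound with constant $C\beta/\alpha$. No single step is difficult; the care lies in bookkeeping the two pairs of constants $(\alpha,\beta)$ and $(\alpha',\beta')$ and in confirming first-order stationarity. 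The main thing I would verify is that stationarity genuinely holds: the marginal minimisers are taken over the full unconstrained spaces $\Reals^{m\times r}$ and $\Reals^{r\times n}$, so the partial gradients vanish at them and no boundary terms appear, while the role of Assumption~\ref{ass:partition} is the prerequisite one of keeping the constants $\alpha,\beta$ (Lemma~\ref{lem:marg-strong-cvx-smooth-fn}) and $C$ (Lemma~\ref{lem:robustbistability}) finite and positive through boundedness of the level sets.
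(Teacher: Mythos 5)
Your proof is correct and follows essentially the same route as the paper's: lower-bound $f(\L,\R^\ast)+f(\L^\ast,\R)-2f^\ast$ via marginal strong convexity at the stationary global optimum, upper-bound $2f(\L,\R)-f(\L,\tilde\R)-f(\tilde\L,\R)$ via marginal strong smoothness at the marginal optima, and chain the two through the $C$-robust bistability inequality of Lemma~\ref{lem:robustbistability}. Your bookkeeping of the two constant pairs $(\alpha,\beta)$, $(\alpha',\beta')$ and the explicit check of first-order stationarity are slightly more careful than the paper's terse version, but the argument is the same.
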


\begin{proof}[Proof of Lemma \ref{lemma:local-conv-gam}]
Notice that $f$ is
$\alpha$-MSC, $\beta$-MSS in both $\L$ and $\R$, as shown in Lemma \ref{lem:marg-cvx-fn} and 
\ref{lem:marg-strong-cvx-smooth-fn}.
From Lemma \ref{lem:marg-strong-cvx-smooth-fn}:
\begin{align}
f( \L, \R^\ast) + f( \L^\ast, \R) \geq 2f^\ast + \frac{\alpha}{2}\left( \norm{ \L- \L^\ast}_2^2 + \norm{y- \L^\ast}_2^2 \right)\\
2f( \L, \R) \leq f( \L,\tilde \R) + f(\tilde \L, \R) + \frac{\beta}{2}\left( \norm{ \L-\tilde \L}_2^2+\norm{\R-\tilde \R}_2^2 \right)
\end{align}
Applying robust bistability of Lemma \ref{lem:robustbistability} then proves the result.
\end{proof}

Using Lemma \ref{lemma:local-conv-gam}, we can present a bound on the limit point and the rate of convergence to it, i.e., prove Theorem \ref{thm:gam-msc-mss-proof}, which we restate here for convenience:

\begin{theorem}[]
There exists $\overline \tau > 0$, such that
Algorithm~\ref{alg:SCDM} with the initialization to all-zero vector after 
at most $T = O(\log\frac{1}{\epsilon})$ steps has $f( \L^T, \R^T) \leq f^\ast + \epsilon$
with probability 1.
\end{theorem}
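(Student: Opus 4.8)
The plan is to reduce the claim to the convergence theory for generalised alternating minimisation of Jain et al. \cite{jain2017non}, since every structural ingredient that theory needs has already been established for $f$: marginal convexity (Lemma~\ref{lem:marg-cvx-fn}), $\alpha$-marginal strong convexity together with $\beta$-marginal strong smoothness (Lemma~\ref{lem:marg-strong-cvx-smooth-fn}), and the \emph{global} $C$-robust bistability property (Lemma~\ref{lem:robustbistability}). Because robust bistability holds for every $(\L,\R)$, the resulting rate will not depend on the starting point, which is precisely what licenses the all-zero initialisation. First I would fix $\overline\tau$ large enough that each inner phase of Algorithm~\ref{alg:SCDM} --- randomised block-coordinate descent on a marginal subproblem, which by Lemma~\ref{lem:marg-strong-cvx-smooth-fn} is $\alpha$-strongly convex and $\beta$-smooth --- drives the corresponding factor close to its marginal optimum $\mopt(\cdot)$. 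With $\overline\tau$ so chosen, each outer round behaves, up to a controllable slack, like one exact alternating-minimisation sweep $\R^{T+1}\in\mopt(\L^T)$ followed by $\L^{T+1}\in\mopt(\R^{T+1})$.

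The core is a one-round contraction on the suboptimality $\Phi_T := f(\L^T,\R^T)-f^\ast$. I would lower-bound the per-round decrease by the squared step lengths, using marginal strong convexity at the marginal minimisers,
\[
\Phi_T - \Phi_{T+1} \;\geq\; \tfrac{\alpha}{2}\bigl(\norm{\L^T-\L^{T+1}}_2^2 + \norm{\R^T-\R^{T+1}}_2^2\bigr),
\]
and in particular keep the $\L$-term. Next I would upper-bound $\Phi_{T+1}$ by a single step length: since $\L^{T+1}\in\mopt(\R^{T+1})$ gives $f(\L^{T+1},\R^{T+1})\le f(\L^\ast,\R^{T+1})$, marginal strong smoothness at $\L^\ast$ yields $\Phi_{T+1}\le\tfrac{\beta}{2}\norm{\R^{T+1}-\R^\ast}_2^2$, and then Lemma~\ref{lemma:local-conv-gam}, applied at the intermediate point $(\L^T,\R^{T+1})$ where one marginal gap vanishes by construction, bounds $\norm{\L^T-\L^\ast}_2^2+\norm{\R^{T+1}-\R^\ast}_2^2$ by $\tfrac{C\beta}{\alpha}\norm{\L^T-\L^{T+1}}_2^2$. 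Chaining the two bounds gives $\Phi_{T+1}\le\eta\,\Phi_T$ with $\eta = C\beta^2/(C\beta^2+\alpha^2)<1$, and unrolling this geometric recursion produces $\Phi_T\le\eta^T\Phi_0\le\epsilon$ once $T\ge\log(\Phi_0/\epsilon)/\log(1/\eta)=O(\log\tfrac1\epsilon)$.

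Finally I would discharge the probabilistic and inexactness caveats. The ``with probability~1'' guarantee comes from Theorem~\ref{lemma:bistable-stat}, which already asserts almost-sure convergence of Algorithm~\ref{alg:SCDM} to a bistable point under the random coordinate sampling; the deterministic contraction above then pins down the rate along almost every trajectory. The inexactness introduced by running only $\overline\tau<\infty$ inner steps, rather than solving $\mopt(\cdot)$ exactly, I would absorb by enlarging $\overline\tau$ so that the residual marginal gap is a small fraction of $\Phi_T$, degrading $\eta$ only slightly while keeping it below~$1$.

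I expect the main obstacle to be exactly this last reconciliation: making the effect of the finite inner horizon $\overline\tau$ rigorous. One must show that a \emph{fixed} number of randomised coordinate-descent sweeps suffices, uniformly over the level set (bounded by Assumption~\ref{ass:partition}), to keep each inexact update inside the regime where the contraction $\eta<1$ survives --- and to do so pathwise rather than merely in expectation, so that it is compatible with the almost-sure statement inherited from Theorem~\ref{lemma:bistable-stat}. A secondary subtlety is the non-uniqueness of the factored optimum $(\L^\ast,\R^\ast)$, which must be handled by measuring distance to the nearest representative, consistently with how Lemmas~\ref{lem:robustbistability} and~\ref{lemma:local-conv-gam} are stated.
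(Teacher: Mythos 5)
Your proposal follows essentially the same route as the paper's own proof: the Jain-style potential $\Phi_T = f(\L^T,\R^T)-f^\ast$, a one-round contraction obtained by chaining marginal strong convexity/smoothness (Lemma~\ref{lem:marg-strong-cvx-smooth-fn}) with robust bistability through Lemma~\ref{lemma:local-conv-gam}, and geometric unrolling to reach $T=O(\log\frac{1}{\epsilon})$. The only deviations are cosmetic --- you sweep $\R$ before $\L$ where the algorithm and the paper do the reverse, and you apply Lemma~\ref{lemma:local-conv-gam} at the intermediate point instead of paying the extra factor of $2$ from $(a+b)^2\le 2(a^2+b^2)$, which changes the constant $\eta$ but not the rate --- and the finite-$\overline\tau$ inexactness you flag as the main obstacle is precisely the step the paper also leaves informal.
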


\begin{proof}
We follow \cite{jain2017non} and use $\Phi^{(k)} = f( \L^{(k)}, \R^{(k)}) - f^\ast$ as the potential function. 
The $\overline \tau$ we require depends on the cardinality of $\hatSr, \hatSc$, and the model of computation, but should be large enough for marginal optimisation, i.e., $\nabla_ \L f( \L^\ast, \R^\ast) = 0$.
Then, Lemma \ref{lem:marg-strong-cvx-smooth-fn} assures:
\[
f( \L^{(k+1)}, \R^\ast) - f( \L^\ast, \R^\ast) \leq \frac{\beta}{2}\norm{ \L^{(k+1)}- \L^\ast}_2^2.
\]
Further, considering $ \R^{(k+1)} \in \mopt( \L^{(k+1)})$, we have:
\begin{align*}
\Phi^{(k + 1)} & = f( \L^{(k+1)}, \R^{(k+1)}) - f^\ast \leq f( \L^{(k+1)}, \R^\ast) - f^\ast 
 \leq \frac{\beta}{2}\norm{ \L^{(k+1)}- \L^\ast}_2^2,
\end{align*}
Again, considering $\nabla_ \L f( \L^{(k+1)}, \R^{(k)}) = 0$ for large-enough $\hatSr$,
\begin{align*}
f( \L^{(k)}, \R^{(k)}) &\geq f( \L^{(k+1)}, \R^{(k)}) + \frac{\alpha}{2}\norm{ \L^{(k+1)}- \L^{(k)}}_2^2\\
&\geq f( \L^{(k+1)}, \R^{(k+1)}) + \frac{\alpha}{2}\norm{ \L^{(k+1)}- \L^{(k)}}_2^2,
\end{align*}
and consequently
\[
\Phi^{(k)} - \Phi^{(k + 1)} \geq \frac{\alpha}{2}\norm{ \L^{(k+1)}- \L^{(k)}}_2^2.
\]
Applying Lemma~\ref{lemma:local-conv-gam},
\begin{align*}
\norm{ \L^{(k)}- \L^\ast}_2^2 & \leq \norm{ \L^{(k)}- \L^\ast}_2^2 + \norm{ \R^{(k)}- \R^\ast}_2^2\\
& \leq \frac{C\beta}{\alpha}\norm{ \L^{(k)}- \L^{(k+1)}}_2^2.
\end{align*}
Using $(a+b)^2 \leq 2(a^2+b^2)$,
\begin{align*}
\Phi^{(k + 1)} &\leq \frac{\beta}{2}\norm{ \L^{(k+1)}- \L^\ast}_2^2 \\
& \leq \beta \left( \norm{ \L^{(k+1)}- \L^{(k)}}_2^2 + \norm{ \L^{(k)}- \L^\ast}_2^2 \right) \\
&\leq \beta(1 + C\kappa)\norm{ \L^{(k+1)}- \L^{(k)}}_2^2 \\
& \leq 2\kappa(1 + C\kappa) \left( \Phi^{(k)} - \Phi^{(k + 1)} \right).
\end{align*}
Finally, by simple algebra,
\begin{align}
\Phi^{(k + 1)} \leq \eta_0\cdot\Phi^{(k)},
\end{align}
where 
\begin{align}
\label{eta0}
\eta_0 = \frac{2\kappa(1 + C\kappa)}{1+2\kappa(1 + C\kappa)} < 1.
\end{align}
\end{proof}


Finally:

\begin{proof}[Proof of Theorem \ref{theorem.inexact}]
The proof follows from Theorem~\ref{thm:gam-msc-mss-proof}, by invoking the triangle inequality and 
the sum of a geometric series. In particular, due to Theorem~\ref{thm:gam-msc-mss-proof}, one has for each $k$
\begin{align}
    f(\L_k, \R_k; \M_k) - f(\L_{k}^*, \R_{k}^*; \M_{k}) \leq \eta_0 (f(\L_{k-1}, \R_{k-1}; \M_{k}) - f(\L_{k}^*, \R_{k}^*; \M_{k})).
\end{align}
By summing and subtracting $\eta_0 f(\L_{k-1}, \R_{k-1}; \M_{k-1})$ to the right-hand-side and putting without loss of generality $f(\L_k^*, \R_k^*; \M_k) = f(\L_k^*, \R_k^*; \M_{k-1})$,
\begin{align}
    f(\L_k, \R_k; \M_k) - f(\L_{k}^*, \R_{k}^*; \M_{k}) \leq & \eta_0 (f(\L_{k-1}, \R_{k-1}; \M_{k}) - f(\L_{k-1}, \R_{k-1}; \M_{k-1})  + \notag \\ & + f(\L_{k-1}, \R_{k-1}; \M_{k-1})- f(\L_{k-1}^*, \R_{k-1}^*; \M_{k-1})), 
\end{align}
and by using Assumption~\ref{as:varying}
\begin{align}
    f(\L_k, \R_k; \M_k) - f(\L_{k}^*, \R_{k}^*; \M_{k}) \leq \eta_0 (f(\L_{k-1}, \R_{k-1}; \M_{k-1}) - f(\L_{k-1}^*, \R_{k-1}^*; \M_{k-1})) + \eta_0 e.
\end{align}
By summation of geometric series, the claim is proven.
\end{proof}

\clearpage
\section{Details of the Thresholding} 
\label{app:threshold}

As suggested in the main body of the text, we start by looking for the best linear combination ${\bf c}$ that minimizes difference in $L_1$:
\begin{equation}
{\bf c}_{opt} =
\arg \min_{{\bf c}} \|{\bf c}{\bf R} - {\bf f}\|_1 =
\arg \min_{{\bf c}} \sum_{i=1}^N |({\bf c}{\bf R})_i - {\bf f}_i|,
\label{eq:coptL1}
\end{equation}
where ${\bf c}$ is a $1{\times}$rank vector, ${\bf f}$ is a 2D image flattened into $1{\times}N$ vector, and $({\bf c}{\bf R})_i$ is the scalar result of multiplication between vector ${\bf c}$ and $i$-th column of matrix ${\bf R}$.
Due to the robust property of $L_1$ norm, the formulation (\ref{eq:coptL1}) provides a close approximation of the new frame at the majority of stationary (background) points, while leaving residuals at the ``moving'' (foreground) points relatively high.
By introducing the additional variables $m_i$: $|({\bf c}{\bf R})_i - {\bf f}_i| \le m_i$, for all $i=\overline{1,N}$, the optimization problem can be reformulated as a linear program:
\begin{equation}
\begin{array}{cccccc}
\mbox{minimize:}   &    {}   & {} & \sum_{i=1}^N m_i & {}  & {}   \\
{} \\
\mbox{subject to:} & 0 & \le & m_i & < & +\infty, \\
{}                 & -\infty & <  & ({\bf c}{\bf R})_i - m_i & \le & f_i, \\
{}                 & f_i & \le & ({\bf c}{\bf R})_i + m_i & < & +\infty.
\end{array}
\end{equation}
Alternatively, one can consider the robust Geman-McLure function $\rho(r,\sigma) = {r^2}/(r^2 + \sigma^2)$ as featured in \cite{Sawhney96}, where parameter $\sigma$ is estimated from the distribution of residuals over the course of optimization
\begin{equation}
{\bf c}_{opt} = \arg\,\min_{{\bf c}}\,\sum_{i=1}^N \rho\left(({\bf c}{\bf R})_i - f_i\right).
\label{eq:copt-robust}
\end{equation}
In practice, both (\ref{eq:coptL1}) and (\ref{eq:copt-robust}) produce results of similar quality, with a slightly better statistical performance of (\ref{eq:coptL1}) at a minor additional expense in terms of run-time, compared to the use of gradient methods \cite{Sawhney96} in minimisation of (\ref{eq:copt-robust}).


After the optimal linear combination ${\bf c}_{opt}$ has been obtained in (\ref{eq:coptL1}), the next step is to compute residuals $r_i$ = $|({\bf c}{\bf R})_i - {\bf f}_i|$ and threshold them into those generated by the low-rank model, $r_i < T$, and the remainder, $r_i >= T$, where $T$ is some threshold. Thresholding for background subtraction is a vast area by itself. 
Although locally adapted threshold may work best, it is quite common to choose a single threshold for each frame. We follow the same practice:
As often \cite{Malistov2014,akhriev2019deep} in Computer Vision, 
we seek a threshold of the \textit{highest sensitivity}, when isolated points ``just'' show up.
In particular, we seek a threshold such that a certain fraction (0.0025) of $3{\times}3$ contiguous patches have 1 or 2 pixels exceeding the threshold, as suggested in Figure \ref{fig:configs}. 
To explain this in detail, consider the RGB colour images, where the point-wise 2D \textit{residual map} is computed as follows:
$$
r_{i} = \left|R^{(f)}_i - R^{(b)}_i\right| +
        \left|G^{(f)}_i - G^{(b)}_i\right| +
        \left|B^{(f)}_i - B^{(b)}_i\right|,
$$
where subscripts $f$ and $b$ stands for current frame and background respectively, and index $i$ enumerates image pixels. Other metrics like Euclidean one are also possible. We accumulate so called histogram of thresholds by analysing $3 \times 3$ neighbourhood of each point in the \textit{residual map}. There are several how residual value at the central point of relates to its neighbour.

Let us consider one example. Suppose, the central value in the largest one $v_1$ and we pick up the second $v_2$ and the third $v_3$ largest ones from the $3 \times 3$ vicinity, $v_3 \le v_2 \le v_1$, and all the values are integral as usual for images. If a threshold happens in the interval $[v_3+1 \ldots v_1]$ then one of the patterns depicted on Figure~\ref{fig:configs} will show up after thresholding. As such, this particular point ``votes'' for the range $[v_3+1 \ldots v_1]$ in the histogram of thresholds, which means we increment counters in the bins $v_3+1$ to $v_1$. Repeating the process for all the points, we arrive to the histogram of thresholds as shown in Figure~\ref{fig:histthr}.
The region around the mode of the histogram ($50\%$ of its area), outlined by yellow margins on Figure~\ref{fig:histthr}, mostly contains noise. We start search for the optimum threshold from the right margin to the right until the value of histogram bin is less then $0.0025{\cdot}N$, where $N$ is the number of pixels. We found experimentally that the fraction $0.0025$ works the best, although its value can be varied without drastic effect.

For another example of use of similar thresholding techniques, please see \cite{akhriev2019deep}.

\begin{figure}[hb]
\begin{center}
\includegraphics[width=0.7\textwidth]{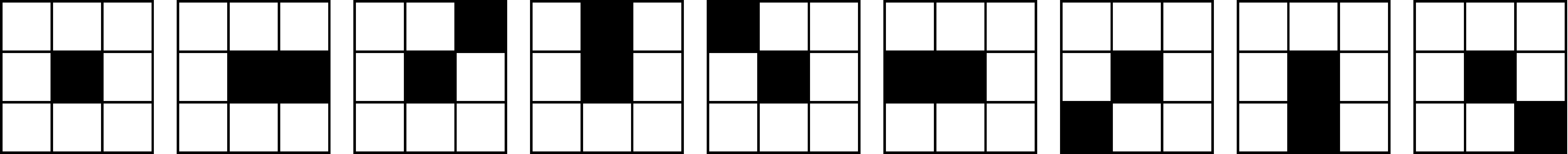}
\caption{The configurations of the $3 \times 3$ contiguous patches, whose fraction within all the $3 \times 3$ contiguous patches is sought.}
\label{fig:configs}
\end{center}
\end{figure}

\begin{figure}[ht]
\begin{center}
\includegraphics[width=0.7\textwidth]{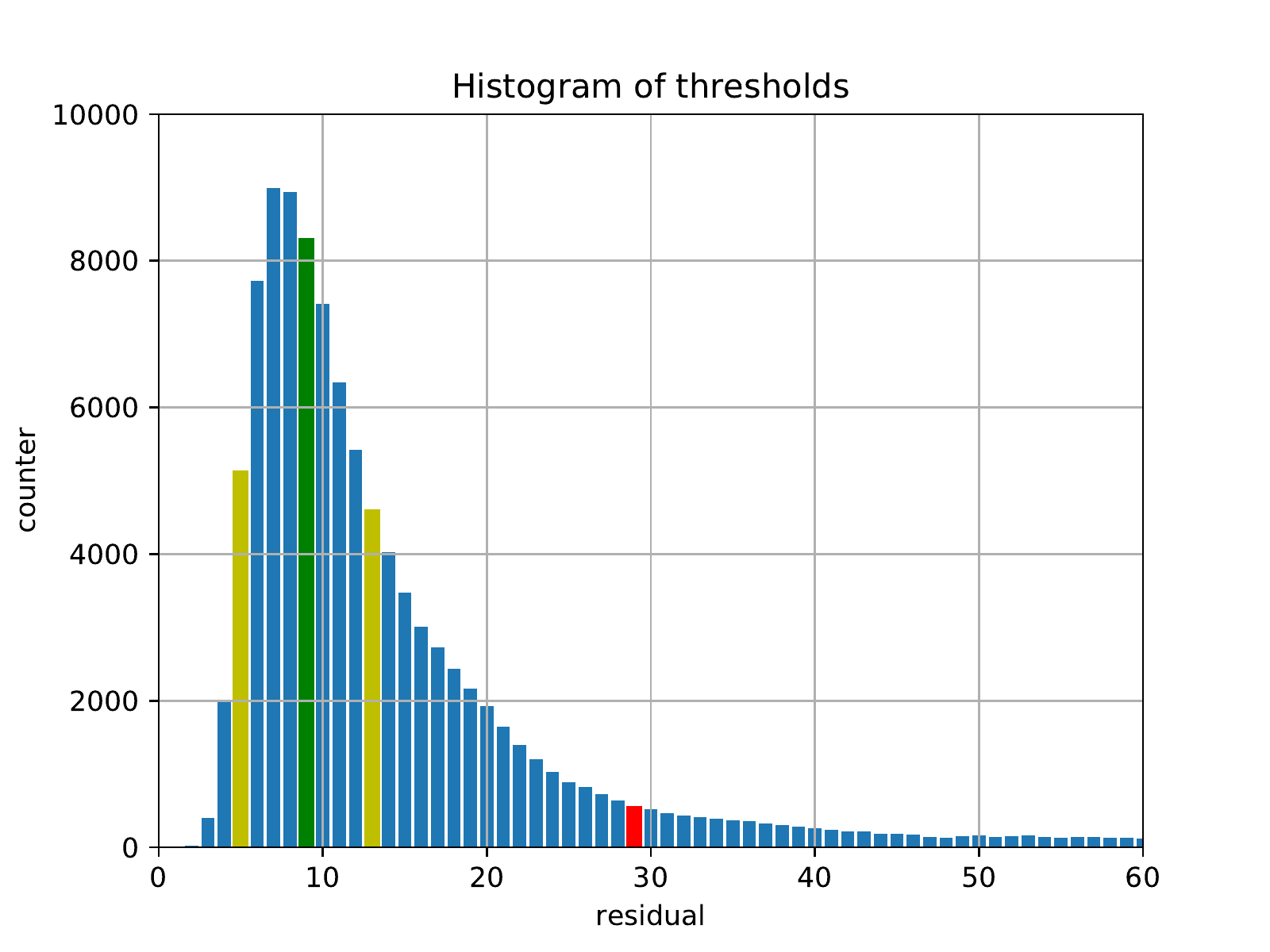}
\end{center}
\caption{A histogram of residuals. The histogram was truncated from the original $3{\cdot}255$ residuals to allow for some clarity of presentation. In green, there is the middle of the least-width interval representing half of the mass. In yellow, there are the end-points of the interval. In red, the ``optimal'' threshold we use.}
\label{fig:histthr}
\end{figure}




\clearpage
\section{A Derivation of the Step Size}
\label{appC}

\newcommand{\Xlo}{\underline{\M}_{k, ij}}
\newcommand{\Xup}{\overline{\M}_{k, ij}}
\newcommand{\Lset}{{\cal L}}
\newcommand{\Uset}{{\cal U}}
\newcommand{\Lir}{\L_{k, i r}}
\newcommand{\Rrj}{\R_{k, r j}}

\subsection{Minimisation of the objective function in $\Lir$}

\begin{eqnarray}
f(\L_k, \R_k) &=& \frac{\mu}{2}\sum_{i,j}
\left(\L_{k,ij}^2 + \R_{k,ij}^2\right) + \nonumber \\
{} & {} & \frac{1}{2} \sum_{\L_{k,i:} \R_{k,:j} < \Xlo}
\left(\L_{i:} \R_{:j} - \Xlo\right)^2 + \nonumber \\
{} & {} & \frac{1}{2} \sum_{\L_{k,i:} \R_{k,:j} > \Xup}
\left(\L_{k,i:} \R_{k,:j} - \Xup\right)^2.
\end{eqnarray}

\begin{eqnarray}
\frac{\partial f}{\partial\Lir} &=& \mu\Lir + \nonumber \\
{} & {} & \sum_{j \,:\, \L_{k,i:} \R_{k,:j} < \Xlo}
\left(\L_{k,i:} \R_{k,:j} - \Xlo\right)\Rrj + \nonumber \\
{} & {} & \sum_{j \,:\, \L_{k,i:} \R_{k,:j} > \Xup}
\left(\L_{k,i:} \R_{k,:j} - \Xup\right)\Rrj.
\end{eqnarray}

\begin{equation}
W_{i r} \triangleq
 \mu +
\sum_{j \,:\, \L_{k,i:} \R_{k,:j} < \Xlo} \Rrj^2 +
\sum_{j \,:\, \L_{k,i:} \R_{k,:j} > \Xup} \Rrj^2.
\end{equation}

\begin{equation}
\delta = -\frac{\partial   f}{\partial\Lir} \Big/
          W_{k,i r}.
\end{equation}

\begin{equation}
\Lir \leftarrow \Lir + \delta.
\end{equation}

\begin{equation}
\A_{k,i j} \leftarrow \A_{k,i j} + \delta\Rrj \qquad \forall j.
\end{equation}

\vfill 
\pagebreak
\subsection{Minimisation of the objective function in $\Rrj$}

\begin{eqnarray}
f({\L_k}, {\R_k}) &=& \frac{\mu}{2}\sum_{i,j}
\left(\L_{k,ij}^2 + \R_{k,ij}^2\right) + \nonumber \\
{} & {} & \frac{1}{2} \sum_{\L_{k,i:} \R_{k,:j} < \Xlo}
\left(\L_{k,i:} \R_{k,:j} - \Xlo\right)^2 + \nonumber \\
{} & {} & \frac{1}{2} \sum_{\L_{k,i:} \R_{k,:j} > \Xup}
\left(\L_{k,i:} \R_{k,:j} - \Xup\right)^2.
\end{eqnarray}

\begin{eqnarray}
\frac{\partial f}{\partial\Rrj} &=& \mu\Rrj + \nonumber \\
{} & {} & \sum_{i \,:\, \L_{k,i:} \R_{k,:j} < \Xlo}
\left(\L_{k,i:} \R_{k,:j} - \Xlo\right)\Lir + \nonumber \\
{} & {} & \sum_{i \,:\, \L_{k,i:} \R_{k,:j} > \Xup}
\left(\L_{k,i:} \R_{k,:j} - \Xup\right)\Lir.
\end{eqnarray}

\begin{equation}
V_{k,r j} \triangleq
 \mu +
\sum_{i \,:\, \L_{k,i:} \R_{k,:j} < \Xlo} \Lir^2 +
\sum_{i \,:\, \L_{k,i:} \R_{k,:j} > \Xup} \Lir^2.
\end{equation}

\begin{equation}
\delta = -\frac{\partial   f}{\partial\Rrj} \Big/
          V_{k,r j}.
\end{equation}

\begin{equation}
\Rrj \leftarrow \Rrj + \delta.
\end{equation}

\begin{equation}
\A_{k,i j} \leftarrow \A_{k,i j} + \delta\Lir \qquad \forall i.
\end{equation}

\clearpage 
\section{Complete Results}
\label{sec:resultsComplete}

In Table \ref{tab:scores}, we present the overall results on changedetection.net as the average over all the frames of a video, 
with a standard deviation 
 in parentheses.
First, we present MS-SSIM of \cite{wang2003}, a well-known measure of similarity of the background of each frame to our rank-4 estimate thereof,
which is also known as the multiscale structural similarity for image quality.
There, our estimates perform rather well, with the exception of videos featuring dynamic backgrounds such as waves and reflections of sun light on water,
where the low-rank model is not updated often enough to capture all of the rapid changes.
Next, we present the F1 score, which is the harmonic mean of precision and recall
 and which we used the code provided by CDnet to evaluate against the ground truth.
We should like to stress that the F1 score depends on thresholding method, which is quite simple in our current 
implementation and could be improved.
Finally, a number of modern methods including the top three in the CDnet ranking as of May 2018 
are ``supervised'', in the sense that they derive megabytes of a model from the test set and then apply the model to the test set, 
which constitutes ``double dipping''. 
With these caveats in mind, the performance seems rather respectable.


\begin{table*}[h!]
\begin{tabular}{lcccc}
\toprule
\texttt{Video sequence} & \texttt{MS-SSIM} & \texttt{F1-score} & \texttt{Recall} & \texttt{Precision} \\
\midrule
badWeather/blizzard                     & 0.990 {\tiny (0.011)} & 0.752 {\tiny (0.164)} & 0.901 {\tiny (0.102)} & 0.675 {\tiny (0.189)} \\
badWeather/skating                      & 0.980 {\tiny (0.020)} & 0.872 {\tiny (0.147)} & 0.890 {\tiny (0.106)} & 0.891 {\tiny (0.173)} \\
badWeather/snowFall                     & 0.976 {\tiny (0.027)} & 0.601 {\tiny (0.236)} & 0.832 {\tiny (0.161)} & 0.505 {\tiny (0.242)} \\
badWeather/wetSnow                      & 0.979 {\tiny (0.022)} & 0.446 {\tiny (0.222)} & 0.838 {\tiny (0.122)} & 0.356 {\tiny (0.222)} \\
baseline/PETS2006                       & 0.983 {\tiny (0.005)} & 0.769 {\tiny (0.116)} & 0.963 {\tiny (0.040)} & 0.655 {\tiny (0.146)} \\
baseline/highway                        & 0.946 {\tiny (0.030)} & 0.886 {\tiny (0.070)} & 0.848 {\tiny (0.098)} & 0.934 {\tiny (0.049)} \\
baseline/office                         & 0.959 {\tiny (0.044)} & 0.652 {\tiny (0.192)} & 0.647 {\tiny (0.234)} & 0.745 {\tiny (0.200)} \\
baseline/pedestrians                    & 0.988 {\tiny (0.002)} & 0.930 {\tiny (0.083)} & 0.989 {\tiny (0.017)} & 0.887 {\tiny (0.115)} \\
dynamicBackground/boats                 & 0.794 {\tiny (0.022)} & 0.316 {\tiny (0.171)} & 0.209 {\tiny (0.138)} & 0.890 {\tiny (0.121)} \\
dynamicBackground/canoe                 & 0.758 {\tiny (0.031)} & 0.692 {\tiny (0.196)} & 0.561 {\tiny (0.206)} & 0.994 {\tiny (0.049)} \\
dynamicBackground/fall                  & 0.824 {\tiny (0.042)} & 0.274 {\tiny (0.182)} & 0.291 {\tiny (0.114)} & 0.430 {\tiny (0.348)} \\
dynamicBackground/fountain01            & 0.919 {\tiny (0.016)} & 0.245 {\tiny (0.140)} & 0.336 {\tiny (0.162)} & 0.208 {\tiny (0.133)} \\
dynamicBackground/fountain02            & 0.957 {\tiny (0.003)} & 0.785 {\tiny (0.143)} & 0.702 {\tiny (0.172)} & 0.925 {\tiny (0.101)} \\
dynamicBackground/overpass              & 0.935 {\tiny (0.015)} & 0.644 {\tiny (0.154)} & 0.584 {\tiny (0.150)} & 0.778 {\tiny (0.223)} \\
intermittentObjectMotion/abandonedBox   & 0.997 {\tiny (0.004)} & 0.563 {\tiny (0.284)} & 0.505 {\tiny (0.295)} & 0.724 {\tiny (0.283)} \\
intermittentObjectMotion/parking        & 0.945 {\tiny (0.106)} & 0.230 {\tiny (0.297)} & 0.190 {\tiny (0.293)} & 0.868 {\tiny (0.243)} \\
intermittentObjectMotion/sofa           & 0.979 {\tiny (0.013)} & 0.518 {\tiny (0.213)} & 0.501 {\tiny (0.226)} & 0.585 {\tiny (0.266)} \\
intermittentObjectMotion/streetLight    & 0.999 {\tiny (0.002)} & 0.339 {\tiny (0.277)} & 0.294 {\tiny (0.280)} & 0.756 {\tiny (0.347)} \\
intermittentObjectMotion/tramstop       & 0.977 {\tiny (0.036)} & 0.393 {\tiny (0.268)} & 0.293 {\tiny (0.224)} & 0.727 {\tiny (0.341)} \\
intermittentObjectMotion/winterDriveway & 0.970 {\tiny (0.045)} & 0.394 {\tiny (0.197)} & 0.930 {\tiny (0.163)} & 0.286 {\tiny (0.177)} \\
lowFramerate/port\_0\_17fps             & 0.988 {\tiny (0.013)} & 0.223 {\tiny (0.179)} & 0.557 {\tiny (0.275)} & 0.187 {\tiny (0.183)} \\
lowFramerate/tramCrossroad\_1fps        & 0.995 {\tiny (0.007)} & 0.758 {\tiny (0.146)} & 0.934 {\tiny (0.061)} & 0.663 {\tiny (0.167)} \\
lowFramerate/tunnelExit\_0\_35fps       & 0.979 {\tiny (0.024)} & 0.628 {\tiny (0.277)} & 0.836 {\tiny (0.129)} & 0.564 {\tiny (0.286)} \\
lowFramerate/turnpike\_0\_5fps          & 0.967 {\tiny (0.034)} & 0.736 {\tiny (0.185)} & 0.639 {\tiny (0.204)} & 0.947 {\tiny (0.044)} \\
nightVideos/bridgeEntry                 & 0.980 {\tiny (0.021)} & 0.098 {\tiny (0.068)} & 0.977 {\tiny (0.042)} & 0.053 {\tiny (0.040)} \\
nightVideos/busyBoulvard                & 0.995 {\tiny (0.006)} & 0.304 {\tiny (0.177)} & 0.623 {\tiny (0.240)} & 0.259 {\tiny (0.237)} \\
nightVideos/fluidHighway                & 0.935 {\tiny (0.067)} & 0.103 {\tiny (0.098)} & 0.946 {\tiny (0.077)} & 0.059 {\tiny (0.061)} \\
nightVideos/streetCornerAtNight         & 0.985 {\tiny (0.022)} & 0.281 {\tiny (0.164)} & 0.838 {\tiny (0.149)} & 0.188 {\tiny (0.153)} \\
nightVideos/tramStation                 & 0.986 {\tiny (0.018)} & 0.750 {\tiny (0.119)} & 0.895 {\tiny (0.120)} & 0.668 {\tiny (0.144)} \\
nightVideos/winterStreet                & 0.955 {\tiny (0.050)} & 0.202 {\tiny (0.113)} & 0.946 {\tiny (0.080)} & 0.119 {\tiny (0.075)} \\
shadow/backdoor                         & 0.984 {\tiny (0.005)} & 0.889 {\tiny (0.113)} & 0.924 {\tiny (0.057)} & 0.874 {\tiny (0.137)} \\
shadow/bungalows                        & 0.949 {\tiny (0.064)} & 0.553 {\tiny (0.279)} & 0.620 {\tiny (0.351)} & 0.670 {\tiny (0.248)} \\
shadow/busStation                       & 0.960 {\tiny (0.033)} & 0.733 {\tiny (0.151)} & 0.765 {\tiny (0.192)} & 0.775 {\tiny (0.173)} \\
shadow/copyMachine                      & 0.942 {\tiny (0.030)} & 0.571 {\tiny (0.131)} & 0.758 {\tiny (0.200)} & 0.528 {\tiny (0.236)} \\
shadow/cubicle                          & 0.983 {\tiny (0.012)} & 0.696 {\tiny (0.210)} & 0.757 {\tiny (0.193)} & 0.705 {\tiny (0.255)} \\
shadow/peopleInShade                    & 0.968 {\tiny (0.031)} & 0.823 {\tiny (0.226)} & 0.992 {\tiny (0.015)} & 0.754 {\tiny (0.267)} \\
\bottomrule
\end{tabular}
\vskip 4mm
\caption{Results on changedetection.net.}
\label{tab:scores}
\end{table*}

\clearpage
In Table \ref{tab:DAEvsOMoGMF2}, we present a comparison similar to Table \ref{tab:DAEvsOMoGMF}, except that results of 
Algorithm~\ref{alg:SCDM} are obtained by using the smooth Geman-McLure loss function instead of subsampling with the non-smooth L1 norm.

\begin{table*}[h!]
\caption{Further results on  {\footnotesize\url{http://changedetection.net}}.}
\label{tab:DAEvsOMoGMF2}
\begin{center}
{\footnotesize
\begin{tabular}{lcccccc}
\toprule
Method / Category & Recall & Specificity & FPR
& FNR & Precision & F1     \\
\midrule
\textbf{Algorithm~\ref{alg:SCDM} (w/ Geman-McLure)}: & {} & {} & {} & {} & {} & {} \\
\midrule
badWeather        & 0.86733 & 0.98695 & 0.01305 & \textbf{0.13267} & 0.52229 & 0.62347 \\
baseline          & 0.85684 & \textbf{0.99078} & \textbf{0.00922} & 0.14316 & \textbf{0.77210} & \textbf{0.80254} \\
cameraJitter      & 0.59669 & \textbf{0.95909} & \textbf{0.04091} & 0.40331 & \textbf{0.56832} & \textbf{0.51461} \\
dynamicBackground & 0.46994 & \textbf{0.99627} & \textbf{0.00373} & 0.53006 & \textbf{0.63826} & \textbf{0.49071} \\
nightVideo        & \textbf{0.83068} & 0.87444 & 0.12556 & \textbf{0.16932} & 0.20758 & 0.29115 \\
shadow            & \textbf{0.76715} & \textbf{0.97409} & \textbf{0.02591} & \textbf{0.23285} & \textbf{0.61857} & \textbf{0.66833} \\
\midrule
Overall           & 0.73144 & \textbf{0.96360} & \textbf{0.03640} & 0.26856 & \textbf{0.55452} & \textbf{0.56514} \\
\midrule
\textbf{OMoGMF} \cite{Yong18}: & {} & {} & {} & {} & {} & {} \\
\midrule
badWeather        & \textbf{0.86871} & 0.98939 & 0.01061 & 0.13129 & \textbf{0.57917} & \textbf{0.67214} \\
baseline          & \textbf{0.89943} & 0.98289 & 0.01711 & \textbf{0.10057} & 0.62033 & 0.72611 \\
cameraJitter      & \textbf{0.85954} & 0.90739 & 0.09261 & \textbf{0.14046} & 0.30567 & 0.44235 \\
dynamicBackground & \textbf{0.87655} & 0.86383 & 0.13617 & \textbf{0.12345} & 0.08601 & 0.15012 \\
nightVideo        & 0.75607 & 0.92372 & 0.07628 & 0.24393 & 0.23252 & 0.31336 \\
shadow            & 0.55772 & 0.80276 & 0.03057 & 0.27562 & 0.40539 & \textbf{0.37450} \\
\midrule
Overall           & \textbf{0.80300} & 0.91166 & 0.06056 & \textbf{0.16922} & 0.37151 & 0.44643 \\
\midrule
\textbf{ST\_GRASTA} \cite{He11}: & {} & {} & {} & {} & {} & {} \\
\midrule
badWeather        & 0.26555 & \textbf{0.98971} & \textbf{0.01029} & 0.73445 & 0.45526 & 0.30498 \\
baseline          & 0.45340 & 0.98205 & 0.01795 & 0.54660 & 0.44009 & 0.42367 \\
cameraJitter      & 0.51138 & 0.91313 & 0.08687 & 0.48862 & 0.23995 & 0.31572 \\
dynamicBackground & 0.41411 & 0.94755 & 0.05245 & 0.58589 & 0.08732 & 0.13736 \\
nightVideo        & 0.42488 & \textbf{0.97224} & \textbf{0.02776} & 0.57512 & \textbf{0.24957} & 0.28154 \\
shadow            & 0.44317 & 0.96681 & 0.03319 & 0.55683 & 0.42604 & 0.41515 \\
\midrule
Overall           & 0.41875 & 0.96192 & 0.03808 & 0.58125 & 0.31637 & 0.31307 \\
\midrule
\textbf{RPCA\_FPCP} \cite{Rodriguez13}: & {} & {} & {} & {} & {} & {} \\
\midrule
badWeather        & 0.82546 & 0.84424 & 0.15576 & 0.17454 & 0.09950 & 0.16687 \\
baseline          & 0.73848 & 0.94733 & 0.05267 & 0.26152 & 0.29994 & 0.37900 \\
cameraJitter      & 0.74452 & 0.84143 & 0.15857 & 0.25548 & 0.18436 & 0.29024 \\
dynamicBackground & 0.69491 & 0.80688 & 0.19312 & 0.30509 & 0.03928 & 0.07134 \\
nightVideos       & 0.79284 & 0.85751 & 0.14249 & 0.20716 & 0.11797 & 0.19497 \\
shadow            & 0.72132 & 0.90454 & 0.09546 & 0.27868 & 0.26474 & 0.36814 \\
\midrule
Overall :         & 0.75292 & 0.86699 & 0.13301 & 0.24708 & 0.16763 & 0.24509 \\
\bottomrule
\end{tabular}}
\end{center}
\end{table*}

\clearpage
In Table \ref{tab:all-baseline-time}, we present a comparison of the mean run-time per frame of the methods discussed in the paper. Notice that this corresponds to a 
speed-up of up to the factor of 103.

\begin{table}[ht]
\caption{Mean processing time per input frame (in seconds) on the ``baseline/highway'' video-sequence from {\footnotesize\url{http://changedetection.net}}. Note, our implementation does not use any parallelisation at the moment. This was done on purpose to run on a machine serving multiple cameras simultaneously.}
\label{tab:all-baseline-time}
\begin{center}
{\footnotesize
\begin{tabular}{lc}
\toprule
Method & Mean time per frame     \\
\midrule
LRR\_FastLADMAP \cite{Lin11}               & 4.611 \\
MC\_GROUSE \cite{Balzano13}                & 10.621 \\
OMoGMF \cite{Meng13,Yong18}                & 0.123 \\
RPCA\_FPCP \cite{Rodriguez13}              & 0.504 \\
ST\_GRASTA \cite{He11}                     & 3.266  \\
TTD\_3WD \cite{Oreifej13}                  & 10.343 \\
Algorithm~\ref{alg:SCDM} (w/ Geman-McLure) & \textbf{0.103} \\
Algorithm~\ref{alg:SCDM} (w/ $L_1$ norm)   & 0.194 \\
\bottomrule
\end{tabular}}
\end{center}
\end{table}

\clearpage
\section{Additional Illustrations}
\label{appD}

\begin{figure}[h!]
\includegraphics[width=\textwidth]{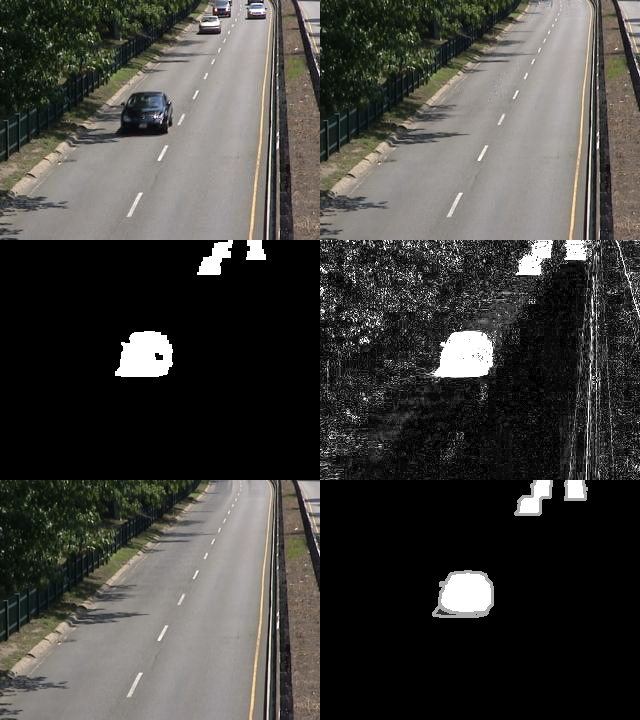}
\caption{One snapshot from the video baseline/highway (from the top left, clock-wise): one frame of the original video, our estimate of the background, our residuals prior to thresholding, the ground truth, an exponential smoothing of all frames prior to the current one with smoothing factor of 1/35, and finally, our Boolean map obtained by thresholding residuals.}
\end{figure}

\end{document}